\crefname{hypothesis}{Hypothesis}{Hypotheses}
\newcounter{example}
\newenvironment{example}{\refstepcounter{example}\vspace{1ex}
{\sc Example \theexample.}\hspace{0.3em}\parindent=0pt}{\vspace{1ex}}
\begin{document}

\title{Robust and effective eSIF preconditioning for general SPD
matrices\thanks{Submitted for review.
\funding{The research of Jianlin Xia was supported in part by an NSF grant DMS-1819166.}}
}
\author{Jianlin Xia\thanks{Department of Mathematics, Purdue University, West
Lafayette, IN 47907 (xiaj@math.purdue.\allowbreak~edu).}}
\maketitle

\begin{abstract}
We propose an unconditionally robust and highly effective preconditioner for general symmetric positive definite (SPD)
matrices based on structured
incomplete factorization (SIF), called enhanced SIF (eSIF) preconditioner. The
original SIF strategy proposed recently derives a structured preconditioner by applying block diagonal
preprocessing to the matrix and then compressing
appropriate scaled off-diagonal blocks. Here, we use an enhanced
scaling-and-compression strategy to design the new eSIF preconditioner. Some
subtle modifications are made, such as the use of two-sided block triangular
preprocessing. A practical multilevel eSIF scheme is then designed. We give
rigorous analysis for both the enhanced scaling-and-compression strategy and
the multilevel eSIF preconditioner. The new eSIF framework has some
significant advantages and overcomes some major limitations of the SIF strategy. (i) With the same
tolerance for compressing the off-diagonal blocks,
the eSIF preconditioner can approximate the original matrix to a much higher
accuracy. (ii) The new preconditioner leads to much more significant
reductions of condition numbers due to an accelerated magnification effect for
the decay in the singular values of the scaled off-diagonal blocks. (iii) With
the new preconditioner, the eigenvalues of the preconditioned matrix are much
better clustered around $1$. (iv) The multilevel eSIF preconditioner is
further unconditionally robust or is guaranteed to be positive definite
without the need of extra stabilization, while the multilevel SIF
preconditioner has a strict requirement in order to preserve positive
definiteness. Comprehensive numerical tests are used to show the advantages of
the eSIF preconditioner in accelerating the convergence of iterative solutions.
\end{abstract}

\headers{Jianlin Xia}{Robust and effective eSIF preconditioning}
\begin{keywords}
eSIF preconditioning, SPD matrix, enhanced scaling-and-compression strategy, effectiveness, unconditional robustness, multilevel scheme
\end{keywords}

\begin{AMS}
15A23, 65F10, 65F30
\end{AMS}

\section{Introduction\label{sec:intro}}

In this paper, we consider the design of an effective and robust
preconditioning strategy for general dense symmetric positive definite
(SPD)\ matrices. An effective preconditioner can significantly improve the convergence of iterative
solutions. For an SPD matrix $A$, it is also
desirable for the preconditioner to be robust or to preserve the positive
definiteness. A\ commonly used strategy to design robust preconditioners is to
apply modifications or incomplete/approximate Cholesky factorizations to $A$
together with some robustness or stability enhancement strategies (see, e.g.,
\cite{axe94,ben00,ben03,drm94,ker78}).

In recent years, a powerful tool has been introduced into the design of robust
SPD preconditioners and it is to use low-rank approximations for certain dense
blocks in $A$, $A^{-1}$, or some factors of $A$. A common way is to directly
approximate $A$ by so-called rank structured forms like the ones in
\cite{fasthss,xin18}, but it is usually difficult to justify the performance
of the resulting preconditioners. On the other hand, there are two types of
methods that enable rigorous analysis of the effectiveness. One type is in
\cite{li13,li17,li16,xi16} based on low-rank strategies for approximating
$A^{-1}$. Another type is in
\cite{agu18,cam20,fel20,gu10,li12,schurmono,hssprec,hssprecs} where
approximate Cholesky factorizations are computed using low-rank approximations
of relevant off-diagonal blocks. Both types of methods have been shown useful
for many applications. A critical underlying reason (sometimes unnoticed in
earlier work) behind the success of these preconditioners is actually to apply
appropriate block diagonal scaling to $A$ first and then compress the
resulting scaled off-diagonal blocks. A systematic way to formalize this is
given in \cite{hssprec} as a so-called scaling-and-compression strategy and
the resulting factorization is said to be a structured incomplete
factorization (SIF). The preconditioning technique is called SIF\ preconditioning.

The basic idea of (one-level)\ SIF\ preconditioning is as follows \cite{hssprec}. Suppose $A$ is $N\times N$ and is partitioned as
\begin{equation}
A\equiv\left(
\begin{array}
[c]{cc}A_{11} & A_{12}\\
A_{21} & A_{22}\end{array}
\right)  . \label{eq:a}\end{equation}
where the diagonal blocks $A_{11}$ and $A_{22}$ have Cholesky factorizations
of the forms
\begin{equation}
A_{11}=L_{1}L_{1}^{T},\quad A_{22}=L_{2}L_{2}^{T}. \label{eq:diagchol}\end{equation}
Then the inverses of these Cholesky factors are used to scale the off-diagonal
blocks. That is, let
\begin{equation}
C=L_{1}^{-1}A_{12}L_{2}^{-T}. \label{eq:c}\end{equation}
Suppose $C$ has singular values $\sigma_{1}\geq\sigma_{2}\geq\cdots\geq
\sigma_{k}$ (which are actually all smaller than $1$), where $k$ is the
smaller of the row and column sizes of $C$. Then the singular values
$\sigma_{i}$ are truncated aggressively so as to enable the quick computation of a rank structured approximate factorization of $A$.

Thus, the SIF technique essentially employs block diagonal scaling to
preprocess $A$ before relevant compression. This makes a significant difference as compared with standard
rank-structured preconditioners that are based on direct off-diagonal
compression. Accordingly, the SIF preconditioner has some attractive features,
such as the convenient analysis of the performance, the convenient control of
the approximation accuracy, and the nice effectiveness for preconditioning
\cite{hssprec,hssprecs}. In fact, if only $r$ largest singular values of $C$
are kept in its low-rank approximation, then the resulting preconditioner
(called a one-level or prototype preconditioner) approximates $A$ with a
relative accuracy bound $\sigma_{r+1}$. The preconditioner also produces a
condition number $\frac{1+\sigma_{r+1}}{1-\sigma_{r+1}}$ for the
preconditioned matrix. This idea can be repeatedly applied to the diagonal
blocks to yield a practical multilevel SIF preconditioner.

A key idea for the effectiveness of the SIF preconditioner lies in a decay
magnification effect \cite{schurmono,hssprec}. That is, although for a matrix
$A$ where the singular values $\sigma_{i}$ of $C$ may only slightly decay, the
condition number $\frac{1+\sigma_{r+1}}{1-\sigma_{r+1}}$ decays at a much
faster rate to $1$. Thus, it is possible to use a relatively small truncation
rank $r$ to get a structured preconditioner that is both effective and
efficient to apply. A similar reason is also behind the effectiveness of those
preconditioners in \cite{li13,li17,li16,xi16,schurmono}.

However, the SIF\ preconditioning has two major limitations. One is in the
robustness. In the multilevel case, it needs a strict condition to avoid
breakdown and ensure the existence or positive definiteness of the
preconditioner. This condition needs either the condition number of $A$ to be
reasonably small, the low-rank approximation tolerance to be small, or the
number of levels to be small. These mean the sacrifice of either the applicability or the efficiency of the preconditioner, as pointed out in
\cite{hssprecs}.

Another limitation is in the effectiveness. Although the condition number form
$\frac{1+\sigma_{r+1}}{1-\sigma_{r+1}}$ has the decay magnification effect, if
the decay of $\sigma_{i}$ is too slow, using small $r$ would not reduce the
condition number too much. With small $r$, the eigenvalues of the
preconditioned matrix may not closely cluster around $1$ either. The
performance of the preconditioner can then be less satisfactory.

Therefore, the motivation of this work is to overcome both limitations of the
SIF\ technique. We make enhancements in several aspects. First, we would like
get rid of the condition in the SIF scheme that avoids breakdown. That is, we
produce a type of structured preconditioners that is \emph{unconditionally
robust} or always positive definite. Second, we would like to approximate $A$
with \emph{better accuracies} using the same truncation rank $r$. Next, we
intend to \emph{accelerate the decay magnification effect} in the condition
number form. Lastly, we also try to \emph{improve the eigenvalue clustering}
of the preconditioned matrix.

Our idea to achieve these enhancements is to make some subtle changes to the
original SIF scheme. Instead of block diagonal scaling, we use two-sided
block triangular preprocessing which leads to an \emph{enhanced
scaling-and-compression strategy}. Then a low-rank
approximation is still computed for $C$, but it is just used to accelerate
computations related to Schur complements instead of off-diagonal blocks.
(This will be made more precise in Section \ref{sec:prototype}.) This strategy
can be repeatedly applied to $A_{11}$ and $A_{22}$ in (\ref{eq:a}) so as to
yield an efficient structured multilevel preconditioner.

This strategy makes it convenient to analyze the resulting preconditioners.
The one-level preconditioner can now approximate $A\ $with a relative accuracy
bound $\sigma_{r+1}^{2}$ (in contrast with the bound $\sigma_{r+1}$ in the
SIF\ case). The preconditioned matrix now has condition number $\frac
{1}{1-\sigma_{r+1}^{2}}$, which is a significant improvement from
$\frac{1+\sigma_{r+1}}{1-\sigma_{r+1}}$ due to the quadratic form
$\sigma_{r+1}^{2}$ and the smaller numerator. Similar improvements are also
achieved with the multilevel preconditioner.

Moreover, the eigenvalues of the preconditioned matrix are now more closely
clustered around $1$. With the new one-level preconditioner, the eigenvalues
are redistributed to $[1-\sigma_{r+1}^{2},1]$, with the eigenvalue $1$ of
multiplicity $N-(k-r)$. In comparison, the one-level SIF preconditioner only
brings the eigenvalues to the interval $[1-\sigma_{r+1},1+\sigma_{r+1}]$, with
the eigenvalue $1$ of multiplicity $N-2(k-r)$. Similarly, the new multilevel
preconditioner also greatly improves the eigenvalue clustering.

In addition, the multilevel generalization of the strategy always produces a
positive definite preconditioner $\tilde{A}$ without the need of extra
stabilization or diagonal compensation. In fact, the scheme has an automatic
\emph{positive definiteness enhancement effect}. That is, $\tilde{A}$ is equal
to $A$ plus a positive semidefinite matrix. Thus, the new multilevel
preconditioner is unconditionally robust.

Due to all these enhancements, the new preconditioner is called an
\emph{enhanced SIF (eSIF) preconditioner}. We give comprehensive analysis of
the accuracy, robustness, and effectiveness of both the one-level and the
multilevel eSIF preconditioners in Theorems \ref{thm:err1}, \ref{thm:ev1},
\ref{thm:err} and \ref{thm:ev}. All the benefits combined yield significantly
better effectiveness than the SIF scheme. With the same number of levels and
the same truncation rank $r$, although the eSIF preconditioner is slightly
more expensive to apply in each iteration step, the total iterative solution
cost is much lower.

We also show some techniques to design a practical multilevel eSIF scheme and
then analyze the efficiency and storage. The practical scheme avoids forming
dense blocks like $C$ in (\ref{eq:c}) while enabling the convenient low-rank
approximation of these blocks. It also produces structured factors defined by
compact forms such as Householder vectors.

The performance of the preconditioner is illustrated in terms of some
challenging test matrices including some from \cite{hssprec}. As compared with
the SIF preconditioner, the eSIF preconditioner yields dramatic reductions in
the number of conjugate gradient iterations.

The organization of the remaining sections is as follows. The enhanced
scaling-and-compression strategy and the one-level eSIF preconditioner will be
presented and analyzed in Section \ref{sec:prototype}. The techniques and
analysis will then be generalized to multiple levels in Section
\ref{sec:multi}. Section \ref{sec:alg} further gives the practical multilevel
design of the preconditioning scheme and also analyzes the storage and costs.
Comprehensive numerical tests will be given in Section \ref{sec:tests},
following by some conclusions and discussions in Section \ref{sec:concl}. For
convenience, we list frequently used notation as follows.

\begin{itemize}
\item $\lambda(A)$ is used to represent an eigenvalue of $A$ (it is used in a
general way and is not for any specific eigenvalue).

\item $\kappa(A)$ denotes the 2-norm condition number of $A$.

\item $\operatorname{diag}(\cdot)$ is used to mean a diagonal or block
diagonal matrix constructed with the given diagonal entries or blocks.

\item $I_{n}$ is the $n\times n$ identity matrix and is used to distinguish
identity matrices of different sizes in some contexts.
\end{itemize}

\section{Enhanced scaling-and-compression strategy and prototype eSIF
preconditioner\label{sec:prototype}}

We first give the enhanced scaling-and-compression strategy and analyze the
resulting prototype eSIF preconditioner in terms of the accuracy, robustness, and effectiveness.

In the SIF preconditioner in \cite{hssprec}, $A$ in (\ref{eq:a}) can be
written as a factorized form as follows based on (\ref{eq:diagchol}) and
(\ref{eq:c}):
\begin{equation}
A=\left(
\begin{array}
[c]{cc}L_{1} & \\
& L_{2}\end{array}
\right)  \left(
\begin{array}
[c]{cc}I & C\\
C^{T} & I
\end{array}
\right)  \left(
\begin{array}
[c]{cc}L_{1}^{T} & \\
& L_{2}^{T}\end{array}
\right)  , \label{eq:scale0}\end{equation}
where $\left(
\begin{array}
[c]{cc}I & C\\
C^{T} & I
\end{array}
\right)  $
can be viewed as the result after the block diagonal preprocessing or scaling of $A$. $C$ is then approximated by a low-rank form so as to obtain a rank-structured approximate factorization of $A$.

Here, we make some subtle changes which will turn out to make a significant
difference. Rewrite (\ref{eq:scale0}) in the following form:\begin{equation}
A=\left(
\begin{array}
[c]{cc}L_{1} & \\
L_{2}C^{T} & L_{2}\end{array}
\right)  \left(
\begin{array}
[c]{cc}I & \\
& I-C^{T}C
\end{array}
\right)  \left(
\begin{array}
[c]{cc}L_{1}^{T} & CL_{2}^{T}\\
& L_{2}^{T}\end{array}
\right)  . \label{eq:ascale}\end{equation}
Suppose $C$ is $m\times n$ and a rank-$r$ truncated SVD of $C$ is
\begin{equation}
C\approx U_{1}\Sigma_{1}V_{1}^{T}, \label{eq:svd}\end{equation}
where $\Sigma_{1}=\operatorname{diag}(\sigma_{1},\sigma_{2},\ldots,\sigma
_{r})$ is for the largest $r$ singular values $\sigma_{1}\geq\sigma_{2}\geq\cdots\geq\sigma_{r}$ of $C$. For later convenience, we also let the full
SVD of $C$ be\begin{equation}
C=U\Sigma V^{T}=U_{1}\Sigma_{1}V_{1}^{T}+U_{2}\Sigma_{2}V_{2}^{T},
\label{eq:svdc}\end{equation}
where $U=\left(
\begin{array}
[c]{cc}U_{1} & U_{2}\end{array}
\right)  $ and $V=\left(
\begin{array}
[c]{cc}V_{1} & V_{2}\end{array}
\right)  $ are orthogonal and $\Sigma_{2}$ is a (rectangular) diagonal matrix
for the remaining singular values $\sigma_{r+1}\geq\cdots\geq\sigma
_{\min\{m,n\}}$. We further suppose $\tau$ is a tolerance for truncating the
singular values in (\ref{eq:svd}). That is,
\begin{equation}
\sigma_{r}\geq\tau\geq\sigma_{r+1}. \label{eq:tau}\end{equation}
Note that all the singular values $\sigma_{i}$ of $C$ satisfy $\sigma_{i}<1$
\cite{hssprec}, so $\tau<1$.

The apply (\ref{eq:svd}) to $C^{T}C$ in (\ref{eq:ascale}) to get\[
C^{T}C\approx V_{1}\Sigma_{1}^{2}V_{1}^{T}.
\]
In the meantime, we preserve the original form of $C$ in the two triangular factors in (\ref{eq:ascale}). Accordingly,
\begin{equation}
A\approx\tilde{A}\equiv\left(
\begin{array}
[c]{cc}L_{1} & \\
L_{2}C^{T} & L_{2}\end{array}
\right)  \left(
\begin{array}
[c]{cc}I & \\
& I-V_{1}\Sigma_{1}^{2}V_{1}^{T}\end{array}
\right)  \left(
\begin{array}
[c]{cc}L_{1}^{T} & CL_{2}^{T}\\
& L_{2}^{T}\end{array}
\right)  \label{eq:tildea0}\end{equation}
Suppose $\tilde{D}_{2}$ is the lower triangular Cholesky factor of
$I-V_{1}\Sigma_{1}^{2}V_{1}^{T}$:
\begin{equation}
I-V_{1}\Sigma_{1}^{2}V_{1}^{T}=\tilde{D}_{2}\tilde{D}_{2}^{T}.
\label{eq:d2tilde}\end{equation}
Let\begin{equation}
\tilde{L}=\left(
\begin{array}
[c]{cc}L_{1} & \\
L_{2}C^{T} & L_{2}\end{array}
\right)  \left(
\begin{array}
[c]{cc}I & \\
& \tilde{D}_{2}\end{array}
\right)  =\left(
\begin{array}
[c]{cc}L_{1} & \\
& L_{2}\end{array}
\right)  \left(
\begin{array}
[c]{cc}I & \\
C^{T} & I
\end{array}
\right)  \left(
\begin{array}
[c]{cc}I & \\
& \tilde{D}_{2}\end{array}
\right)  . \label{eq:l0}\end{equation}
Then we get a \emph{prototype (1-level) eSIF preconditioner}
\begin{equation}
\tilde{A}=\tilde{L}\tilde{L}^{T}. \label{eq:prec0}
\end{equation}

This scheme can be understood as follows. Unlike in the SIF scheme where $A$
is preprocessed by the block diagonal factor $\left(
\begin{array}
[c]{cc}L_{1} & \\
& L_{2}\end{array}
\right)  $, here we use a block triangular factor $\left(
\begin{array}
[c]{cc}L_{1} & \\
L_{2}C^{T} & L_{2}\end{array}
\right)  $ to preprocess $A$. Note that it is still convenient to invert
$\left(
\begin{array}
[c]{cc}L_{1} & \\
L_{2}C^{T} & L_{2}\end{array}
\right)  =\left(
\begin{array}
[c]{cc}L_{1} & \\
& L_{2}\end{array}
\right)  \left(
\begin{array}
[c]{cc}I & \\
C^{T} & I
\end{array}
\right)  $ in linear system solution so the form of $C$ does not cause any
substantial trouble. Also, we do not need to explicitly form or compress $C$.
In addition, the Cholesky factor $\tilde{D}_{2}$ in (\ref{eq:d2tilde}) is only
used for the purpose of analysis and does not need to be computed. The details
will be given later in a more practical scheme in Section \ref{sec:alg}.

This leads to our \emph{enhanced scaling-and-compression strategy}. We then
analyze the properties of the resulting prototype eSIF preconditioner.
Obviously, $\tilde{A}$ in (\ref{eq:prec0}) always exists and is positive
definite. Furthermore, an additional benefit in the positive definiteness can
be shown. We take a closer look at the positive definiteness of $\tilde{A}$
and also the accuracy of $\tilde{A}$ for approximating $A$.

\begin{theorem}
\label{thm:err1}Let $\tau$ be the truncation tolerance in (\ref{eq:tau}).
$\tilde{A}$ in (\ref{eq:prec0}) satisfies\[
\tilde{A}=A+E,
\]
where $E$ is a positive semidefinite matrix and
\begin{equation}
\frac{\Vert E\Vert_{2}}{\Vert A\Vert_{2}}\leq\sigma_{r+1}^{2}\leq\tau^{2}.
\label{eq:e}\end{equation}
In addition,
\begin{equation}
\frac{\Vert\tilde{L}-L\Vert_{2}}{\Vert L\Vert_{2}}\leq\frac{c\sqrt
{1-\sigma_{n}^{2}}}{1-\sigma_{1}^{2}}\tau^{2}, \label{eq:cholerr}\end{equation}
where $L$ is the lower triangular Cholesky factor of $A$, $c=1+2\left\lceil
\log_{2}n\right\rceil $, and $\sigma_{n}$ is either the $n$-th singular value
of $C$ when $m\geq n$ or is $0$ otherwise. On the other hand, if $\tilde
{D}_{2}$ in $\tilde{L}$ in (\ref{eq:l0}) is replaced by $(I-V_{1}\Sigma
_{1}^{2}V_{1}^{T})^{1/2}$ and $L$ is modified accordingly as $L=\left(
\begin{array}
[c]{cc}L_{1} & \\
L_{2}C^{T} & L_{2}(I-V\Sigma^{T}\Sigma V^{T})^{1/2}\end{array}
\right)  $ so that $A=LL^{T}$ still holds, then
\begin{equation}
\frac{\Vert\tilde{L}-L\Vert_{2}}{\Vert L\Vert_{2}}<\tau^{2}.
\label{eq:cholerr2}\end{equation}

\end{theorem}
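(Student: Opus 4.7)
My plan is to compare $\tilde{A}$ in (\ref{eq:tildea0}) with the exact factorization (\ref{eq:ascale}), both of which share the same outer block-triangular factor $M = \begin{pmatrix} L_1 & 0 \\ L_2 C^T & L_2 \end{pmatrix}$. Subtracting gives
\[
E := \tilde{A} - A = M \begin{pmatrix} 0 & 0 \\ 0 & C^T C - V_1 \Sigma_1^2 V_1^T \end{pmatrix} M^T.
\]
From the full SVD (\ref{eq:svdc}) one reads off $C^T C = V_1 \Sigma_1^2 V_1^T + V_2 \Sigma_2^T \Sigma_2 V_2^T$, so the bracketed middle block equals $V_2 \Sigma_2^T \Sigma_2 V_2^T \succeq 0$ and $E$ is positive semidefinite by congruence. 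For the norm bound (\ref{eq:e}), I would factor $E = \begin{pmatrix} 0 \\ L_2 \end{pmatrix} V_2 \Sigma_2^T \Sigma_2 V_2^T \begin{pmatrix} 0 & L_2^T \end{pmatrix}$ to get $\|E\|_2 \leq \|L_2\|_2^2 \, \sigma_{r+1}^2$, and invoke $\|L_2\|_2^2 = \|A_{22}\|_2 \leq \|A\|_2$ (the standard monotonicity of 2-norm on principal submatrices of an SPD matrix) together with $\sigma_{r+1} \leq \tau$ from (\ref{eq:tau}).

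For (\ref{eq:cholerr}), I would first identify the exact lower-triangular Cholesky factor as $L = \begin{pmatrix} L_1 & 0 \\ L_2 C^T & L_2 D_2 \end{pmatrix}$, where $D_2$ is the lower-triangular Cholesky factor of $I - C^T C$; that $LL^T = A$ is immediate from (\ref{eq:ascale}). Comparison with $\tilde{L}$ in (\ref{eq:l0}) shows that $\tilde{L} - L$ is supported only in the $(2,2)$-block, equal to $L_2(\tilde{D}_2 - D_2)$, so $\|\tilde{L} - L\|_2 \leq \|L_2\|_2 \|\tilde{D}_2 - D_2\|_2$, and since $\|L_2\|_2 \leq \|L\|_2 = \sqrt{\|A\|_2}$ it suffices to bound $\|\tilde{D}_2 - D_2\|_2$. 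From $\tilde{D}_2 \tilde{D}_2^T - D_2 D_2^T = V_2 \Sigma_2^T \Sigma_2 V_2^T$ (2-norm $\sigma_{r+1}^2$) and the spectrum of $I - C^T C$ (yielding $\|D_2\|_2 = \sqrt{1 - \sigma_n^2}$ and $\|D_2^{-1}\|_2 = 1/\sqrt{1 - \sigma_1^2}$), I would invoke a sharp Cholesky perturbation bound of the form $\|\tilde{D}_2 - D_2\|_2 \leq c\,\|D_2\|_2 \|D_2^{-1}\|_2^2\, \sigma_{r+1}^2$, producing the stated prefactor $c\sqrt{1 - \sigma_n^2}/(1 - \sigma_1^2)$. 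The main technical obstacle is matching the precise constant $c = 1 + 2\lceil \log_2 n \rceil$, which requires invoking a specific perturbation analysis (e.g., of Chang--Paige--Stewart type) rather than a generic first-order bound.

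For (\ref{eq:cholerr2}), the replacement of $\tilde{D}_2$ by the symmetric square root makes the corresponding $\tilde{L} - L$ supported only in the $(2,2)$-block as $L_2\bigl[(I - V_1 \Sigma_1^2 V_1^T)^{1/2} - (I - C^T C)^{1/2}\bigr]$. Both matrices can be simultaneously diagonalized in the $V$-basis: the first has eigenvalues $\sqrt{1 - \sigma_i^2}$ for $i \leq r$ and $1$ for $i > r$, while the second has eigenvalues $\sqrt{1 - \sigma_i^2}$ throughout (extended by $1$s when $m < n$). Hence the difference has 2-norm $1 - \sqrt{1 - \sigma_{r+1}^2} = \sigma_{r+1}^2/(1 + \sqrt{1 - \sigma_{r+1}^2})$, which is strictly less than $\sigma_{r+1}^2 \leq \tau^2$ because $\sigma_{r+1} < 1$. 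Combined once more with $\|L_2\|_2/\|L\|_2 \leq 1$ delivers the strict bound (\ref{eq:cholerr2}).
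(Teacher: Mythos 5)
Your proposal is correct and follows essentially the same route as the paper: the same block decomposition identifying $E=\operatorname{diag}(0,L_{2}V_{2}\Sigma_{2}^{T}\Sigma_{2}V_{2}^{T}L_{2}^{T})$, the same reduction of $\Vert\tilde{L}-L\Vert_{2}$ to $\Vert\tilde{D}_{2}-D_{2}\Vert_{2}$, an appeal to the same external Cholesky perturbation bound for (\ref{eq:cholerr}) (the paper cites it from the SIF reference, exactly the step you flag as needing a citation), and the same simultaneous diagonalization in the $V$-basis giving $1-\sqrt{1-\sigma_{r+1}^{2}}<\sigma_{r+1}^{2}$ for (\ref{eq:cholerr2}).
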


\begin{proof}
From (\ref{eq:svdc}) and (\ref{eq:tildea0}), $\tilde{A}$ can be written as\begin{align*}
\tilde{A}  &  =\left(
\begin{array}
[c]{cc}A_{11} & A_{12}\\
A_{21} & L_{2}C^{T}CL_{2}^{T}+L_{2}(I-V_{1}\Sigma_{1}^{2}V_{1}^{T})L_{2}^{T}\end{array}
\right) \\
&  =\left(
\begin{array}
[c]{cc}A_{11} & A_{12}\\
A_{21} & A_{22}+L_{2}(C^{T}C-V_{1}\Sigma_{1}^{2}V_{1}^{T})L_{2}^{T}\end{array}
\right) \\
&  =\left(
\begin{array}
[c]{cc}A_{11} & A_{12}\\
A_{21} & A_{22}+L_{2}(V_{2}\Sigma_{2}^{T}\Sigma_{2}V_{2}^{T})L_{2}^{T}\end{array}
\right)  =A+E,
\end{align*}
where $E=\operatorname*{diag}(0,L_{2}(V_{2}\Sigma_{2}^{T}\Sigma_{2}V_{2}^{T})L_{2}^{T})$ is positive semidefinite and\[
\Vert E\Vert_{2}=\Vert L_{2}(V_{2}\Sigma_{2}^{T}\Sigma_{2}V_{2}^{T})L_{2}^{T}\Vert_{2}\leq\sigma_{r+1}^{2}\Vert L_{2}\Vert_{2}^{2}=\sigma_{r+1}^{2}\Vert A_{22}\Vert_{2}\leq\sigma_{r+1}^{2}\Vert A\Vert_{2}.
\]
Also, let $D_{2}D_{2}^{T}=I-V\Sigma^{T}\Sigma V^{T}$. Then $L=\left(
\begin{array}
[c]{cc}L_{1} & \\
L_{2}C^{T} & L_{2}D_{2}\end{array}
\right)  $. Thus,\begin{align}
\Vert\tilde{L}-L\Vert_{2}  &  =\left\Vert \left(
\begin{array}
[c]{cc}L_{1} & \\
L_{2}C^{T} & L_{2}\tilde{D}_{2}\end{array}
\right)  -\left(
\begin{array}
[c]{cc}L_{1} & \\
L_{2}C^{T} & L_{2}D_{2}\end{array}
\right)  \right\Vert _{2}\label{eq:lerror}\\
&  =\left\Vert \left(
\begin{array}
[c]{cc}0 & \\
& L_{2}(\tilde{D}_{2}-D_{2})
\end{array}
\right)  \right\Vert _{2}\leq\Vert L\Vert_{2}\Vert\tilde{D}_{2}-D_{2}\Vert
_{2}.\nonumber
\end{align}
When $D_{2}$ is the lower triangular Cholesky factor of $I-V\Sigma^{T}\Sigma
V^{T}$, an inequality in \cite{hssprec} gives
\[
\Vert\tilde{D}_{2}-D_{2}\Vert_{2}\leq\frac{c\sqrt{1-\sigma_{n}^{2}}}{1-\sigma_{1}^{2}}\sigma_{r+1}^{2},\quad c=1+2\left\lceil \log_{2}n\right\rceil .
\]
This leads to (\ref{eq:cholerr}).

If $\tilde{D}_{2}$ in $\tilde{L}$ is replaced by $(I-V_{1}\Sigma_{1}^{2}V_{1}^{T})^{1/2}$ and $D_{2}$ is replaced by $(I-V\Sigma^{T}\Sigma
V^{T})^{1/2}$, then\begin{align*}
\Vert\tilde{D}_{2}-D_{2}\Vert_{2}  &  =\Vert(I-V_{1}\Sigma_{1}^{2}V_{1}^{T})^{1/2}-(I-V\Sigma^{T}\Sigma V^{T})^{1/2}\Vert_{2}\\
&  =\Vert(I-\operatorname{diag}(\Sigma_{1}^{2},0))^{1/2}-(I-\Sigma^{T}\Sigma)^{1/2}\Vert_{2}\\
&  =1-\sqrt{1-\sigma_{r+1}^{2}}<\sigma_{r+1}^{2}.
\end{align*}
Then following (\ref{eq:lerror}), we get (\ref{eq:cholerr2}).
\end{proof}

This theorem gives both the accuracy and the robustness of the prototype eSIF
preconditioner. Unlike the SIF\ framework where a similar prototype
preconditioner has a relative accuracy bound $\tau$, here the bound is
$\tau^{2}$ that is much more accurate. In addition, this theorem means the
construction of $\tilde{A}$ automatically has a \emph{positive definiteness
enhancement effect}: it implicitly compensates $A$ by a positive semidefinite
matrix $E$. This is similar to ideas in \cite{gu10,schurmono}. Later, we will
show that this effect further carries over to the multilevel generalization,
which is not the case for the SIF\ preconditioner.

The effectiveness of the prototype eSIF preconditioner can be shown as follows.

\begin{theorem}
\label{thm:ev1}The eigenvalues of $\tilde{L}^{-1}A\tilde{L}^{-T}$ are\[
\lambda(\tilde{L}^{-1}A\tilde{L}^{-T})=1-\sigma_{r+1}^{2},\ldots,1-\sigma
_{k}^{2},\underset{N-(k-r)}{\underbrace{1,\ldots,1}},
\]
where $k=\min\{m,n\}$. Accordingly,
\begin{align*}
\Vert\tilde{L}^{-1}A\tilde{L}^{-T}-I\Vert_{2}  &  =\sigma_{r+1}^{2}\leq
\tau^{2},\\
\kappa(\tilde{L}^{-1}A\tilde{L}^{-T})  &  =\frac{1}{1-\sigma_{r+1}^{2}}\leq\frac{1}{1-\tau^{2}}.
\end{align*}

\end{theorem}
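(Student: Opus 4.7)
The plan is to reduce the spectrum of $\tilde L^{-1}A\tilde L^{-T}$ to a block diagonal problem and then to simultaneously diagonalize it in the orthogonal basis $V$ from the SVD of $C$. Since $A=LL^T$ with $L$ given at the end of Theorem \ref{thm:err1}, I would write
\[
\tilde L^{-1}A\tilde L^{-T}=M M^T,\qquad M:=\tilde L^{-1}L,
\]
and compute $M$ directly from the block triangular forms of $L$ and $\tilde L$ in \eqref{eq:l0}. A short calculation using
\[
\tilde L^{-1}=\left(\begin{array}{cc}L_1^{-1} & \\ -\tilde D_2^{-1}C^T L_1^{-1} & \tilde D_2^{-1}L_2^{-1}\end{array}\right)
\]
shows that the lower-left $(2,1)$ block of $M$ cancels exactly (the $C^T$ contributions match), yielding the clean block diagonal form
\[
M=\left(\begin{array}{cc}I & \\ & \tilde D_2^{-1}D_2\end{array}\right),\qquad
\tilde L^{-1}A\tilde L^{-T}=\left(\begin{array}{cc}I & \\ & \tilde D_2^{-1}D_2 D_2^T\tilde D_2^{-T}\end{array}\right).
\]

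Next I would read off the spectrum of the trailing block. Its eigenvalues coincide with those of the generalized pencil $(D_2D_2^T,\,\tilde D_2\tilde D_2^T)=(I-V\Sigma^T\Sigma V^T,\,I-V_1\Sigma_1^2V_1^T)$. Since $V=(V_1\ V_2)$ is orthogonal, conjugating by $V$ diagonalizes both matrices simultaneously: the first becomes $I-\Sigma^T\Sigma=\operatorname{diag}(1-\sigma_1^2,\ldots,1-\sigma_k^2,1,\ldots,1)$, and the second becomes $\operatorname{diag}(1-\sigma_1^2,\ldots,1-\sigma_r^2,1,\ldots,1)$. Taking the diagonal quotient gives eigenvalues $1$ in the first $r$ positions, $1-\sigma_i^2$ for $i=r+1,\ldots,k$, and $1$ for the trailing $n-k$ positions. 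Combined with the $n_1$ eigenvalues $1$ from the top $I$-block, this yields exactly $1-\sigma_{r+1}^2,\ldots,1-\sigma_k^2$ together with the eigenvalue $1$ of multiplicity $N-(k-r)$, as claimed.

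Finally, the norm and condition number follow immediately. Because all stated eigenvalues lie in $(0,1]$ (using $\sigma_i<1$ from \cite{hssprec}), we have $\|\tilde L^{-1}A\tilde L^{-T}-I\|_2=\max_i|\lambda_i-1|=\sigma_{r+1}^2$ and $\kappa(\tilde L^{-1}A\tilde L^{-T})=1/(1-\sigma_{r+1}^2)$, and the bounds in terms of $\tau$ come from \eqref{eq:tau}.

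The only delicate step is the block-triangular inverse calculation and the bookkeeping needed when $m\ne n$, i.e., making sure $V^TV_1=\binom{I_r}{0}$ and that the trailing zero singular values of $\Sigma^T\Sigma$ (when $m<n$) contribute eigenvalue $1$ rather than an ill-defined ratio. Once that is handled, the simultaneous diagonalization in the basis $V$ makes the rest essentially mechanical.
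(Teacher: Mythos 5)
Your proposal is correct and follows essentially the same route as the paper: the explicit computation $M=\tilde L^{-1}L=\operatorname{diag}(I,\tilde D_2^{-1}D_2)$ reproduces the paper's key identity $\tilde{L}^{-1}A\tilde{L}^{-T}=\operatorname{diag}(I_{N-n},\tilde{D}_{2}^{-1}(I_{n}-V\Sigma^{T}\Sigma V^{T})\tilde{D}_{2}^{-T})$ (which the paper states as ``not hard to verify''), and your simultaneous diagonalization of the pencil in the basis $V$ is the same calculation the paper performs on the product $(I_n-V_1\Sigma_1^2V_1^T)^{-1}(I_n-V\Sigma^T\Sigma V^T)$. The only cosmetic issue is the undefined $n_1$, which should be $N-n$, and your multiplicity bookkeeping for the cases $k=m$ and $k=n$ matches the paper's.
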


\begin{proof}
It is not hard to verify\begin{equation}
\tilde{L}^{-1}A\tilde{L}^{-T}=\operatorname*{diag}(I_{N-n},\tilde{D}_{2}^{-1}(I_{n}-V\Sigma^{T}\Sigma V^{T})\tilde{D}_{2}^{-T}). \label{eq:aprec0}\end{equation}
The eigenvalues of $\tilde{D}_{2}^{-1}(I_{n}-V\Sigma^{T}\Sigma V^{T})\tilde
{D}_{2}^{-T}$ are
\begin{align*}
\lambda(\tilde{D}_{2}^{-1}(I_{n}-V\Sigma^{T}\Sigma V^{T})\tilde{D}_{2}^{-T})
&  =\lambda(\tilde{D}_{2}^{-T}\tilde{D}_{2}^{-1}(I_{n}-V\Sigma^{T}\Sigma
V^{T}))\\
&  =\lambda((I_{n}-V_{1}\Sigma_{1}^{2}V_{1}^{T})^{-1}(I_{n}-V\Sigma^{T}\Sigma
V^{T})).
\end{align*}
Further derivations can be done via the Sherman-Morrison-Woodbury formula or
in the following way:\begin{align*}
&  (I_{n}-V_{1}\Sigma_{1}^{2}V_{1}^{T})^{-1}(I_{n}-V\Sigma^{T}\Sigma V^{T})\\
=\  &  (V(I_{n}-\operatorname{diag}(\Sigma_{1}^{2},0))V^{T})^{-1}V(I_{n}-\Sigma^{T}\Sigma)V^{T}\\
=\  &  V\operatorname{diag}((I_{r}-\Sigma_{1}^{2})^{-1},I_{n-r})(I_{n}-\Sigma^{T}\Sigma)V^{T}\\
=\  &  V\operatorname{diag}(I_{r},I_{n-r}-\Sigma_{2}^{T}\Sigma_{2})V^{T}.
\end{align*}
Thus,
\begin{equation}
\lambda(\tilde{D}_{2}^{-1}(I_{n}-V\Sigma^{T}\Sigma V^{T})\tilde{D}_{2}^{-T})=\lambda(\operatorname*{diag}(I_{r},I_{n-r}-\Sigma_{2}^{T}\Sigma_{2})),
\label{eq:eig3}\end{equation}
which are just $1-\sigma_{r+1}^{2},\ldots,1-\sigma_{k}^{2},1$. The eigenvalue
$1$ is a multiple eigenvalue. If $k=n$, then the eigenvalue $1$ in
(\ref{eq:eig3}) has multiplicity $r$. If $k=m$, $I_{n-r}-\Sigma_{2}^{T}\Sigma_{2}$ also has $n-k$ eigenvalues equal to $1$ so the eigenvalue $1$ in
(\ref{eq:eig3}) has multiplicity $n-(k-r)$. For both cases, the eigenvalue $1$
of $\tilde{L}^{-1}A\tilde{L}^{-T}$ has multiplicity $N-(k-r)$ according to
(\ref{eq:aprec0}).
\end{proof}

To give an idea on the advantages of the prototype eSIF preconditioner over
the corresponding prototype SIF preconditioner in \cite{hssprec}, we compare
the results in Table \ref{tab:compare1} with $\tilde{L}$ and $\tilde{A}$ from
the eSIF or SIF scheme. The eSIF scheme yields a much higher approximation
accuracy than SIF ($\tau^{2}$ vs. $\tau$) for both $\frac{\Vert A-\tilde
{A}\Vert_{2}}{\Vert A\Vert_{2}}$ and $\Vert\tilde{L}^{-1}A\tilde{L}^{-T}-I\Vert_{2}$. The eigenvalues of the preconditioned matrix $\tilde
{L}^{-1}A\tilde{L}^{-T}$ from eSIF are also much more closely clustered around
$1$ and eSIF produces a lot more eigenvalues equal to $1$ than SIF. This is further illustrated in Figure \ref{fig:ev1}.

\begin{table}[ptbh]
\caption{Comparison of prototype SIF and eSIF\ preconditioners that are used
to produce $\tilde{L}$ and $\tilde{A}$, where $k=\min\{m,n\}$ and the results
for the SIF preconditioner are from \cite{hssprec,hssprecs}.}\label{tab:compare1}
\begin{center}
\tabcolsep2pt\renewcommand{\arraystretch}{1.6}
\begin{tabular}
[c]{c|c|c}\hline
& SIF & eSIF\\\hline
$\frac{\Vert\tilde{A}-A\Vert_{2}}{\Vert A\Vert_{2}}$ & $\leq\tau$ & $\leq
\tau^{2}$\\
$\frac{\Vert\tilde{L}-L\Vert_{2}}{\Vert L\Vert_{2}}$ & $\leq\tau+\frac
{c\sqrt{1-\sigma_{n}^{2}}}{1-\sigma_{1}^{2}}\tau^{2}$ & $\leq\frac
{c\sqrt{1-\sigma_{n}^{2}}}{1-\sigma_{1}^{2}}\tau^{2}$\\
$\lambda(\tilde{L}^{-1}A\tilde{L}^{-T})$ & $1\pm\sigma_{r+1},\ldots,1\pm
\sigma_{k},\underset{N-2(k-r)}{\underbrace{1,\ldots,1}}$ & $1-\sigma_{r+1}^{2},\ldots,1-\sigma_{k}^{2},\underset{N-(k-r)}{\underbrace{1,\ldots,1}}$\\
$\Vert\tilde{L}^{-1}A\tilde{L}^{-T}-I\Vert_{2}$ & $\sigma_{r+1}\leq\tau$ &
$\sigma_{r+1}^{2}\leq\tau^{2}$\\
$\kappa(\tilde{L}^{-1}A\tilde{L}^{-T})$ & $\frac{1+\sigma_{r+1}}{1-\sigma_{r+1}}\leq\frac{1+\tau}{1-\tau}$ & $\frac{1}{1-\sigma_{r+1}^{2}}\leq\frac{1}{1-\tau^{2}}$\\\hline
\end{tabular}
\end{center}
\end{table}

\begin{figure}[ptbh]
\centering\includegraphics[height=0.8in]{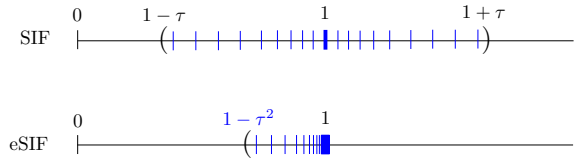}\caption{How the
eigenvalues $\lambda(\tilde{L}^{-1}A\tilde{L}^{-T})$ cluster around $1$ when
$\tilde{L}\tilde{L}^{T}$ is obtained with the prototype SIF and
eSIF\ preconditioners.}\label{fig:ev1}\end{figure}

Specifically, SIF produces $\kappa(\tilde{L}^{-1}A\tilde{L}^{-T})=\frac{1+\sigma_{r+1}}{1-\sigma_{r+1}}$, while eSIF\ leads to much smaller
$\kappa(\tilde{L}^{-1}A\tilde{L}^{-T})=\frac{1}{1-\sigma_{r+1}^{2}}$. (Notice
the quadratic term $\sigma_{r+1}^{2}$ in the denominator and the smaller
numerator.) To further illustrate the difference in $\kappa(\tilde{L}^{-1}A\tilde{L}^{-T})$, we use an example like in \cite{hssprec}. In the
example, the singular values of $C$ look like those in Figure \ref{fig:lap2d}(a) and are based on the analytical forms from a 5-point discrete Laplacian
matrix \cite{hssprecs}. The singular values of $C$ in (\ref{eq:c}) only slowly
decay. Figure \ref{fig:lap2d}(b) shows $\kappa(\tilde{L}^{-1}A\tilde{L}^{-T})$
from both schemes. We can observe two things.

\begin{enumerate}
\item Like in SIF, the modest decay of the nonzero singular values $\sigma
_{i}$ of $C$ is further dramatically magnified in $\frac{1}{1-\sigma_{i}^{2}}$. That is, even if $\sigma_{i}$ decays slowly, $\frac{1}{1-\sigma_{i}^{2}}$
decays much faster so that $\sigma_{i}$ can still be aggressively truncated so
as to produce reasonably small $\kappa(\tilde{L}^{-1}A\tilde{L}^{-T})$. This
is the \emph{decay magnifying effect} like in \cite{hssprec}.

\item Furthermore, the decay magnification effect from eSIF is more dramatic
since$\ \frac{1}{1-\sigma_{i}^{2}}$ is smaller than $\frac{1+\sigma_{i}}{1-\sigma_{i}}$ by a factor of $(1+\sigma_{i})^{2}$. For a large range of $r$
values, eSIF gives much better condition numbers than SIF.
\end{enumerate}

\begin{figure}[ptbh]
\centering\begin{tabular}
[c]{cc}\includegraphics[height=1.6in]{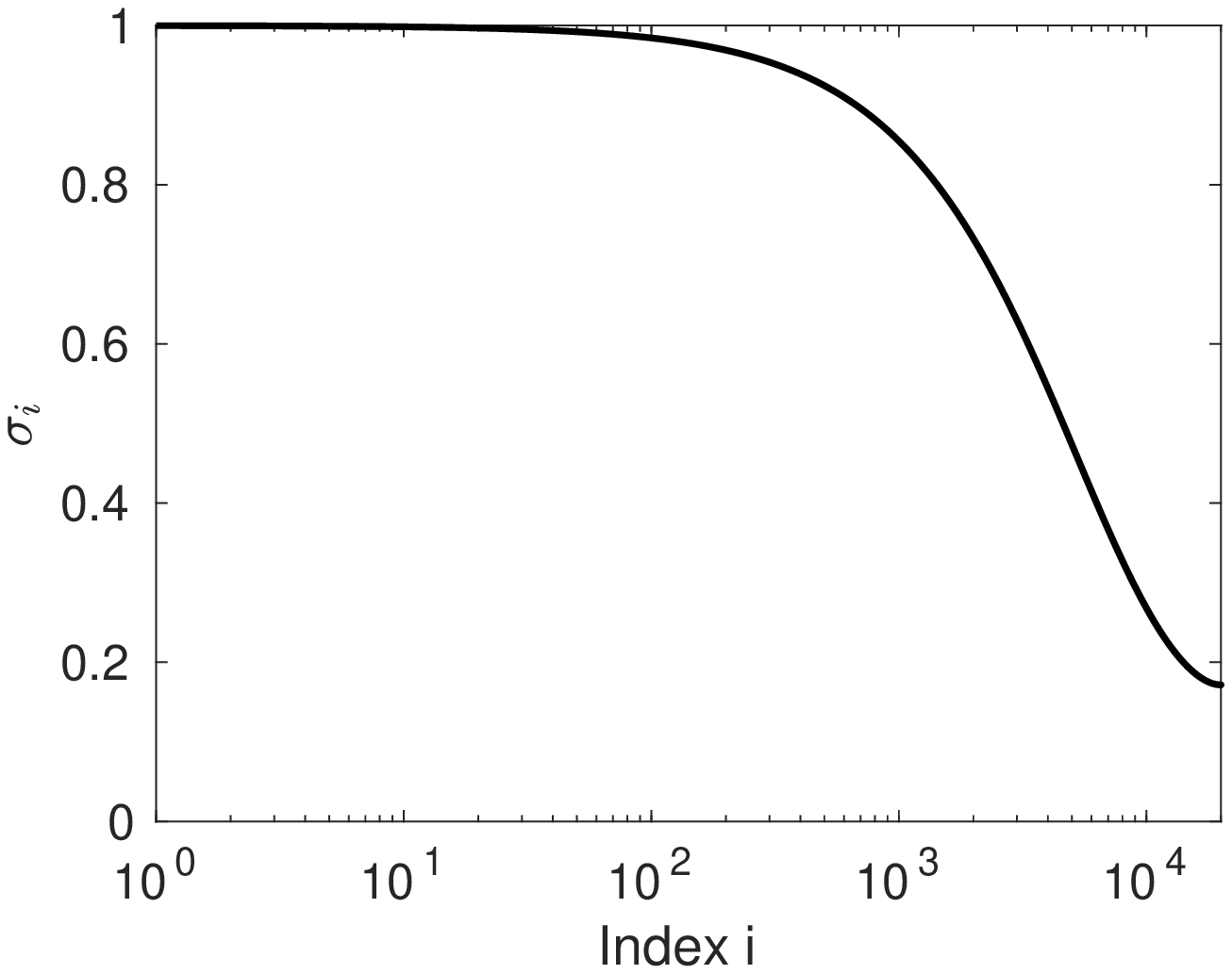} &
\includegraphics[height=1.6in]{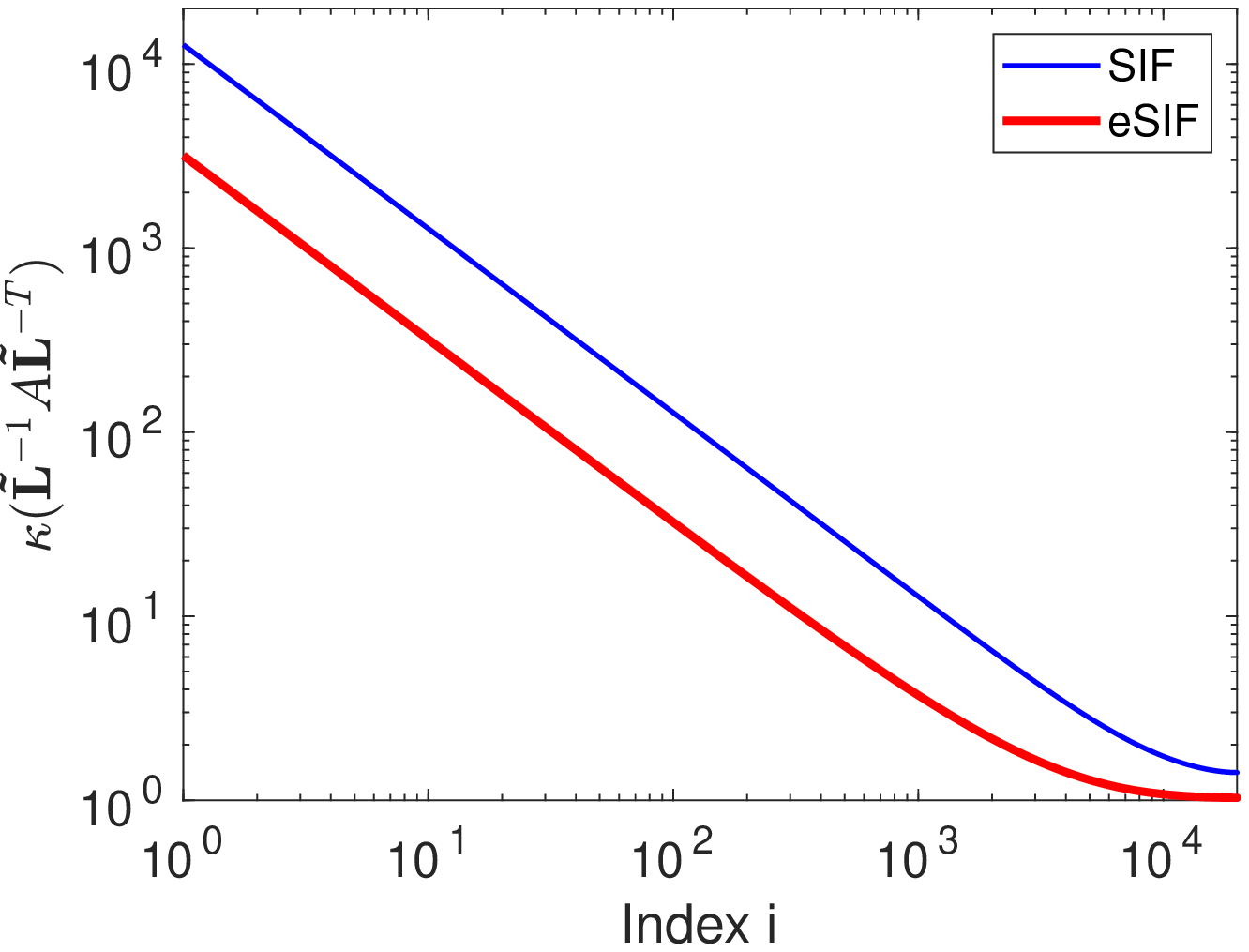}\\
{\scriptsize (a)\ Singular values $\sigma_{i}$ of $C$} &
{\scriptsize (b)\ $\kappa(\tilde{L}^{-1}A\tilde{L}^{-T})$}\end{tabular}
\caption{For an example where the singular values $\sigma_{i}$ of $C$ slowly
decay, how $\kappa(\tilde{L}^{-1}A\tilde{L}^{-T})$ decays when $\tilde{L}$ is
from the prototype SIF or eSIF preconditioner obtained by truncating
$\sigma_{i}$ with $r$ set to be $i$ in (b).}\label{fig:lap2d}\end{figure}

\section{Multilevel eSIF preconditioner\label{sec:multi}}

The prototype preconditioner in the previous section still has two dense
Cholesky factors $L_{1}$ and $L_{2}$ in (\ref{eq:l0}). To get an efficient
preconditioner, we generalize the prototype preconditioner to multiple levels.
That is, apply it repeatedly to the diagonal blocks of $A$. For convenience,
we use eSIF(1) to denote the prototype $1$-level eSIF scheme. A 2-level eSIF
scheme or eSIF(2) uses eSIF(1) to obtain approximate factors $\tilde{L}_{1}\approx L_{1}$ and $\tilde{L}_{2}\approx L_{2}$ for (\ref{eq:diagchol}).
Similarly, an $l$-level eSIF scheme or eSIF($l$) uses eSIF($l-1$) to
approximate $L_{1}$ and $L_{2}$. With a sufficient number of levels (usually
$l=O(\log N)$), the finest level diagonal blocks are small enough and can be
directly factorized. The overall resulting factor $\tilde{L}$ is an eSIF($l$)
factor. The resulting approximation matrix $\tilde{A}$ is an eSIF($l$) preconditioner.

We prove that the eSIF($l$) preconditioner $\tilde{A}$ is always positive
definite and show how accurate $\tilde{A}$ is for approximating $A$.

\begin{theorem}
\label{thm:err}Let $\tau$ be the tolerance for any singular value truncation
like (\ref{eq:svd})--(\ref{eq:tau}) in the eSIF($l$) scheme. The approximate
matrix $\tilde{A}$ resulting from eSIF($l$) is always positive definite and
satisfies\begin{equation}
\tilde{A}=A+E, \label{eq:tildea}\end{equation}
where $E$ is a positive semidefinite matrix and
\[
\frac{\Vert E\Vert_{2}}{\Vert A\Vert_{2}}\leq(1+\tau^{2})^{l}-1.
\]

\end{theorem}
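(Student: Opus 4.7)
The plan is to induct on $l$, using the prototype estimate of Theorem \ref{thm:err1} as both the base case and the inductive engine. For $l = 1$, the statement coincides with Theorem \ref{thm:err1}, since $(1+\tau^2)^1 - 1 = \tau^2$.

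For the inductive step $l-1 \Rightarrow l$, I would first invoke the inductive hypothesis on the two diagonal blocks $A_{11}$ and $A_{22}$ (each SPD as a principal submatrix of $A$) to obtain approximate Cholesky factors $\tilde L_1, \tilde L_2$ satisfying $\tilde L_i \tilde L_i^T = A_{ii} + E_{ii}$, $E_{ii} \succeq 0$, and $\|E_{ii}\|_2 \le ((1+\tau^2)^{l-1} - 1)\|A_{ii}\|_2$. The central idea is then to interpret the level-$l$ step not as approximating $A$ directly, but as applying the prototype eSIF construction to the auxiliary matrix
\[
\hat A = A + \operatorname{diag}(E_{11}, E_{22}),
\]
whose diagonal blocks are exactly $\tilde L_i \tilde L_i^T$. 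Because $A$ is SPD and $\operatorname{diag}(E_{11}, E_{22})$ is PSD, $\hat A$ is SPD; the Schur-complement characterization then forces $\tilde C = \tilde L_1^{-1} A_{12} \tilde L_2^{-T}$ to have all singular values strictly less than $1$, which is precisely the hypothesis needed for Theorem \ref{thm:err1} to apply to $\hat A$.

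Invoking Theorem \ref{thm:err1} on $\hat A$ yields $\tilde A = \hat A + \hat E$ with $\hat E \succeq 0$ and $\|\hat E\|_2 \le \tau^2 \|\hat A\|_2$. Setting $E = \operatorname{diag}(E_{11}, E_{22}) + \hat E$ gives $\tilde A = A + E$ with $E \succeq 0$, so positive definiteness of $\tilde A$ is immediate. For the norm bound, the inductive hypothesis gives $\|\operatorname{diag}(E_{11}, E_{22})\|_2 \le ((1+\tau^2)^{l-1} - 1)\|A\|_2$ and hence $\|\hat A\|_2 \le (1+\tau^2)^{l-1}\|A\|_2$; combining with $\|\hat E\|_2 \le \tau^2 \|\hat A\|_2$ via the triangle inequality yields
\[
\|E\|_2 \le \bigl((1+\tau^2)^{l-1} - 1\bigr)\|A\|_2 + \tau^2 (1+\tau^2)^{l-1}\|A\|_2 = \bigl((1+\tau^2)^l - 1\bigr)\|A\|_2,
\]
closing the induction.

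The only subtle point, and the one worth highlighting, is the reframing of the level-$l$ step as the prototype construction applied to the auxiliary SPD matrix $\hat A$ rather than to $A$. This bypasses any direct perturbation analysis of $\tilde C$ against the (never formed) exact $C$, and it is precisely what makes the positive-definiteness enhancement effect of the prototype scheme propagate cleanly through the recursion, producing the unconditional robustness asserted in the theorem.
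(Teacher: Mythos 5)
Your proof is correct and follows essentially the same route as the paper's: induction on $l$, recasting the level-$l$ step as the prototype scheme applied to the auxiliary SPD matrix $\hat{A}=A+\operatorname{diag}(E_{11},E_{22})$, and combining the two error contributions by the triangle inequality to close the recursion. Your explicit observation that positive definiteness of $\hat{A}$ guarantees the scaled off-diagonal block has singular values below $1$ (so Theorem \ref{thm:err1} applies) is a detail the paper leaves implicit, but the argument is otherwise the same.
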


\begin{proof}
We prove this by induction. $l=1$ corresponds to eSIF(1) and the result is in
Theorem \ref{thm:err1}. Suppose the result holds for eSIF($l-1$) with $l>1$.
Apply eSIF($l-1$) to $A_{11}$ and $A_{22}$ to get approximate Cholesky factors $\tilde{L}_{1}$ and $\tilde{L}_{2}$, respectively. By induction, we have
\[
\tilde{L}_{1}\tilde{L}_{1}{}^{T}=A_{11}+E_{1},\quad\tilde{L}_{2}\tilde{L}_{2}{}^{T}=A_{22}+E_{2},
\]
where $E_{1}$ and $E_{2}$ are positive semidefinite matrices satisfying\begin{align*}
\Vert E_{1}\Vert_{2}  &  \leq\left[  (1+\tau^{2})^{l-1}-1\right]  \Vert
A_{11}\Vert_{2}\leq\left[  (1+\tau^{2})^{l-1}-1\right]  \Vert A\Vert_{2},\\
\Vert E_{2}\Vert_{2}  &  \leq\left[  (1+\tau^{2})^{l-1}-1\right]  \Vert
A_{22}\Vert_{2}\leq\left[  (1+\tau^{2})^{l-1}-1\right]  \Vert A\Vert_{2}.
\end{align*}
Thus,\[
A\approx\left(
\begin{array}
[c]{cc}\tilde{L}_{1}\tilde{L}_{1}{}^{T} & A_{21}^{T}\\
A_{21} & \tilde{L}_{2}\tilde{L}_{2}{}^{T}\end{array}
\right)  =A+\operatorname*{diag}(E_{1},E_{2})\equiv\hat{A}.
\]
Clearly, $\hat{A}$ is always positive definite.

Then apply eSIF(1) to $\hat{A}$ to yield\[
\hat{A}\approx\tilde{A}\equiv\tilde{L}\tilde{L}^{T},
\]
where $\tilde{L}$ is the eSIF($l$) factor. With Theorem \ref{thm:err1} applied
to $\hat{A}$, we get\[
\tilde{A}=\hat{A}+\tilde{E},
\]
where $\tilde{E}$ is a positive semidefinite matrix satisfying $\Vert\tilde
{E}\Vert_{2}\leq\tau^{2}\Vert\hat{A}\Vert_{2}$. Then\[
\tilde{A}=A+(\operatorname*{diag}(E_{1},E_{2})+\tilde{E})\equiv A+E,
\]
where $E=\operatorname*{diag}(E_{1},E_{2})+\tilde{E}$ is positive
semidefinite. Thus, $\tilde{A}$ is positive definite and
\begin{align*}
\Vert E\Vert_{2}  &  \leq\Vert\operatorname*{diag}(E_{1},E_{2})\Vert_{2}+\Vert\tilde{E}\Vert_{2}\\
&  \leq\Vert\operatorname*{diag}(E_{1},E_{2})\Vert_{2}+\tau^{2}\Vert\hat
{A}\Vert_{2}\\
&  =\Vert\operatorname*{diag}(E_{1},E_{2})\Vert_{2}+\tau^{2}\Vert
A+\operatorname*{diag}(E_{1},E_{2})\Vert_{2}\\
&  \leq\tau^{2}\Vert A\Vert_{2}+(1+\tau^{2})\Vert\operatorname*{diag}(E_{1},E_{2})\Vert_{2}\\
&  \leq\tau^{2}\Vert A\Vert_{2}+(1+\tau^{2})\left[  (1+\tau^{2})^{l-1}-1\right]  \Vert A\Vert_{2}\\
&  =\left[  (1+\tau^{2})^{l}-1\right]  \Vert A\Vert_{2}.
\end{align*}
The result then holds by induction.
\end{proof}

Thus, $\frac{\Vert E\Vert_{2}}{\Vert A\Vert_{2}}$ is roughly $O(l\tau^{2})$
for reasonable $\tau$, which indicates a very slow levelwise approximation
error accumulation. Moreover, like eSIF($1$), eSIF($l$) also has a positive
definiteness enhancement effect so that $\tilde{A}$ remains positive definite.
In contract, the multilevel SIF\ scheme in \cite{hssprec} may breakdown due to the loss of positive definiteness.

Then we can look at the effectiveness of the eSIF($l$) preconditioner.

\begin{theorem}
\label{thm:ev}Let $\tau$ be the tolerance for any singular value truncation
like (\ref{eq:svd})--(\ref{eq:tau}) in the eSIF($l$) scheme and $\epsilon
=\left[  (1+\tau^{2})^{l}-1\right]  \kappa(A)$. Let $\tilde{L}$ be the
eSIF($l$) factor. Then the eigenvalues of the preconditioned matrix $\tilde
{L}^{-1}A\tilde{L}^{-T}$ satisfy\begin{equation}
\frac{1}{1+\epsilon}\leq\lambda(\tilde{L}^{-1}A\tilde{L}^{-T})\leq1.
\label{eq:eig2}\end{equation}
Accordingly,\begin{align*}
\Vert\tilde{L}^{-1}A\tilde{L}^{-T}-I\Vert_{2}  &  \leq\frac{\epsilon
}{1+\epsilon},\\
\kappa(\tilde{L}^{-1}A\tilde{L}^{-T})  &  \leq1+\epsilon.
\end{align*}

\end{theorem}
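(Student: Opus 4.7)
The plan is to reduce the eigenvalue analysis to a generalized Rayleigh quotient on the pencil $(A,\tilde{A})$, then exploit the positive semidefinite error form $\tilde{A}=A+E$ established in Theorem \ref{thm:err}.

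First I would note that $\tilde{L}^{-1}A\tilde{L}^{-T}$ is similar to $\tilde{A}^{-1}A$ (via a congruence/similarity through $\tilde{L}^{-T}$), so its spectrum equals the set of generalized eigenvalues $\lambda$ of $Ax=\lambda\tilde{A}x$. Since $A$ and $\tilde{A}$ are both SPD, each such $\lambda$ is a Rayleigh-type quotient
\[
\lambda=\frac{x^{T}Ax}{x^{T}\tilde{A}x}=\frac{x^{T}Ax}{x^{T}Ax+x^{T}Ex}=\frac{1}{1+\frac{x^{T}Ex}{x^{T}Ax}}.
\]
The upper bound $\lambda\le 1$ is then immediate because $E\succeq 0$ in Theorem \ref{thm:err} forces $x^{T}Ex\ge 0$; equivalently, $A\preceq\tilde{A}$ in the Loewner order.

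For the lower bound I would estimate $x^{T}Ex/x^{T}Ax$ from above. A standard Rayleigh quotient argument gives
\[
\frac{x^{T}Ex}{x^{T}Ax}\le\frac{\lambda_{\max}(E)}{\lambda_{\min}(A)}=\|E\|_{2}\,\|A^{-1}\|_{2}=\frac{\|E\|_{2}}{\|A\|_{2}}\,\kappa(A),
\]
and plugging in the bound $\|E\|_{2}/\|A\|_{2}\le(1+\tau^{2})^{l}-1$ from Theorem \ref{thm:err} yields $x^{T}Ex/x^{T}Ax\le\epsilon$. Substituting into the displayed formula for $\lambda$ gives $\lambda\ge 1/(1+\epsilon)$, which is (\ref{eq:eig2}).

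The corollary bounds then follow cleanly: since all eigenvalues of the symmetric matrix $\tilde{L}^{-1}A\tilde{L}^{-T}$ lie in $[1/(1+\epsilon),1]$, we have $\|\tilde{L}^{-1}A\tilde{L}^{-T}-I\|_{2}=1-\lambda_{\min}\le 1-1/(1+\epsilon)=\epsilon/(1+\epsilon)$, and $\kappa(\tilde{L}^{-1}A\tilde{L}^{-T})=\lambda_{\max}/\lambda_{\min}\le 1/(1/(1+\epsilon))=1+\epsilon$. The only subtle step is resisting the temptation to bound $\lambda_{\min}$ by the weaker $1-\epsilon$ (which would come from the crude estimate $\|\tilde{L}^{-1}E\tilde{L}^{-T}\|_{2}\le\|E\|_{2}\|\tilde{A}^{-1}\|_{2}$); writing the quotient in the form $1/(1+x^{T}Ex/x^{T}Ax)$ and bounding the denominator relative to $A$ (not $\tilde{A}$) is what delivers the sharper $1/(1+\epsilon)$ appearing in the theorem.
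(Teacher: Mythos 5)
Your proof is correct and follows essentially the same route as the paper: both arguments measure the perturbation $E$ relative to $A$ (via $\|E\|_{2}\|A^{-1}\|_{2}\leq\epsilon$ together with $E\succeq0$) and then take reciprocals, the paper phrasing this as $L^{-1}\tilde{A}L^{-T}=I+L^{-1}EL^{-T}$ with eigenvalues in $[1,1+\epsilon]$ while you phrase the identical computation as a generalized Rayleigh quotient $1/(1+x^{T}Ex/x^{T}Ax)$ for the pencil $(A,\tilde{A})$. Your closing remark about avoiding the cruder bound relative to $\tilde{A}$ is exactly the point of the paper's choice to invert through $L$ rather than $\tilde{L}$.
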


\begin{proof}
Let $A=LL^{T}$ be the Cholesky factorization of $A$. With (\ref{eq:tildea}),\[
L^{-1}\tilde{A}L^{-T}=I+L^{-1}(\tilde{A}-A)L^{-T}=I+L^{-1}EL^{-T},
\]
According to Theorem \ref{thm:err}, $L^{-1}EL^{-T}$ is positive semidefinite.
Thus, $\lambda(L^{-1}\tilde{A}L^{-T})\geq1$.

Theorem \ref{thm:err} also yields\begin{align*}
\Vert L^{-1}EL^{-T}\Vert_{2}  &  \leq\Vert E\Vert_{2}\Vert L^{-1}\Vert
_{2}\Vert L^{-T}\Vert_{2}\\
&  \leq\left[  (1+\tau^{2})^{l}-1\right]  \Vert A\Vert_{2}\Vert A^{-1}\Vert_{2}=\epsilon.
\end{align*}
Therefore,\[
1\leq\lambda(L^{-1}\tilde{A}L^{-T})\leq1+\epsilon.
\]
Since the eigenvalues of $\tilde{L}^{-1}A\tilde{L}^{-T}$ are the inverses of
those of $L^{-1}\tilde{A}L^{-T}$, we get (\ref{eq:eig2}).
\end{proof}

A comparison of the multilevel eSIF\ and SIF\ preconditioners is given in
Table \ref{tab:compare2}. The multilevel eSIF\ preconditioner has several
significant advantages over the SIF\ one.

\begin{enumerate}
\item The multilevel eSIF\ preconditioner is unconditionally robust or is
guaranteed to be positive definite, while the SIF\ one needs a strict (or even
impractical) condition to ensure the positive definiteness of the
approximation. That is, the SIF one needs $\hat{\epsilon}\equiv\left[
(1+\tau)^{l}-1\right]  \kappa(A)<1$. This means $\tau$ needs to be small
and/or the magnitudes of $l$ and $\kappa(A)$ cannot be very large.

\item The eSIF one gives a more accurate approximation to $A$ with a relative error bound $(1+\tau^{2})^{l}-1$ instead of $(1+\tau)^{l}-1$.

\item The eSIF one produces a much better condition number for the
preconditioned matrix ($1+\epsilon$ vs. $\frac{1+\hat{\epsilon}}{1-\hat{\epsilon}}$ with $\epsilon$ further much smaller than $\hat{\epsilon}$).

\item The eSIF one further produces better eigenvalue clustering for the
preconditioned matrix. The eigenvalues of the preconditioned matrix from eSIF
lie in $[\frac{1}{1+\epsilon},1]$, while those from SIF lie in a much larger
interval $[\frac{1}{1+\hat{\epsilon}},\frac{1}{1-\hat{\epsilon}}]$.
\end{enumerate}

A combination of these advantages makes the eSIF\ preconditioner much more
effective, as demonstrated later in numerical tests. \begin{table}[ptbh]
\caption{Comparison of $l$-level SIF and eSIF\ preconditioners that are used
to produce $\tilde{L}$ and $\tilde{A}$, where the results for the SIF
preconditioner are from \cite{hssprec}.}\label{tab:compare2}
\begin{center}
\tabcolsep5pt\renewcommand{\arraystretch}{1.6}
\begin{tabular}
[c]{c|c|c}\hline
& SIF & eSIF\\\hline
Existence/ & Conditional & \multirow{2}{*}{Unconditional}\\[-2mm]Positive definiteness & ($\hat{\epsilon}\equiv\left[  (1+\tau)^{l}-1\right]
\kappa(A)<1$) & \\
$\frac{\Vert\tilde{A}-A\Vert_{2}}{\Vert A\Vert_{2}}$ & $\leq(1+\tau)^{l}-1$ &
$\leq(1+\tau^{2})^{l}-1$\\
$\lambda(\tilde{L}^{-1}A\tilde{L}^{-T})$ & $\in\lbrack\frac{1}{1+\hat
{\epsilon}},\frac{1}{1-\hat{\epsilon}}]$ & $\in\lbrack\frac{1}{1+\epsilon},1]$\\
$\Vert\tilde{L}^{-1}A\tilde{L}^{-T}-I\Vert_{2}$ & $\leq\frac{\hat{\epsilon}}{1-\hat{\epsilon}}$ & $\leq\frac{\epsilon}{1+\epsilon}$\\
$\kappa(\tilde{L}^{-1}A\tilde{L}^{-T})$ & $\leq\frac{1+\hat{\epsilon}}{1-\hat{\epsilon}}$ & $\leq1+\epsilon$\\\hline
\end{tabular}
\end{center}
\end{table}

\section{Practical eSIF($l$) scheme\label{sec:alg}}

In our discussions above, some steps are used for convenience and are not
efficient for practical preconditioning. In the design of a practical scheme
for eSIF($l$), we need to take care of the following points.

\begin{enumerate}
\item Avoid expensive dense Cholesky factorizations like in (\ref{eq:d2tilde}).

\item Avoid the explicit formation of $C$ in (\ref{eq:c}) (needed in
(\ref{eq:l0})) which is too costly.

\item Compute the low-rank approximation of $C$ without the explicit form of
$C$.
\end{enumerate}

For the first point, we can let $Q$ be an orthogonal matrix extended from $V_{1}$ in (\ref{eq:svd}) so that
\[
Q^{T}V_{1}=\left(
\begin{array}
[c]{c}I\\
0
\end{array}
\right)  .
\]
Since $V_{1}$ has column size $r$ which is typically small for the purpose of
preconditioning, $Q$ can be conveniently obtained with the aid of $r$
Householder vectors. Due to this, $Q$ is generally different from $V$ in
(\ref{eq:svdc}). Then (\ref{eq:d2tilde}) can be replaced by\[
I-V_{1}\Sigma_{1}^{2}V_{1}^{T}=Q(I-\operatorname{diag}(\Sigma_{1}^{2},0))Q^{T}.
\]
Accordingly, $\tilde{A}$ in (\ref{eq:tildea0}) can be rewritten as\[
\tilde{A}=\left(  \!\begin{array}
[c]{cc}L_{1} & \\
L_{2}C^{T} & L_{2}\end{array}
\!\right)  \left(  \!\begin{array}
[c]{cc}I & \\
& Q
\end{array}
\!\right)  \left(  \!\begin{array}
[c]{cc}I & \\
& I-\operatorname{diag}(\Sigma_{1}^{2},0)
\end{array}
\!\right)  \left(  \!\begin{array}
[c]{cc}I & \\
& Q^{T}\end{array}
\!\right)  \left(  \!\begin{array}
[c]{cc}L_{1}^{T} & CL_{2}^{T}\\
& L_{2}^{T}\end{array}
\!\right)  .
\]
Thus, we can let\begin{align}
\tilde{L}  &  =\left(  \!\begin{array}
[c]{cc}L_{1} & \\
& L_{2}\end{array}
\!\right)  \left(  \!\begin{array}
[c]{cc}I & \\
C^{T} & I
\end{array}
\!\right)  \left(  \!\begin{array}
[c]{cc}I & \\
& Q\tilde{\Sigma}_{1}\end{array}
\!\right)  ,\quad\text{with}\label{eq:ltilde}\\
\tilde{\Sigma}_{1}  &  =\operatorname*{diag}(\sqrt{1-\sigma_{1}^{2}},\ldots,\sqrt{1-\sigma_{r}^{2}},1,\ldots,1),\nonumber
\end{align}
so that (\ref{eq:prec0}) still holds.

Next, we try to avoid the explicit formation of $C$ in (\ref{eq:c}) which is
too expensive. Note (\ref{eq:ltilde}) means
\[
\tilde{L}^{-1}=\left(  \!\begin{array}
[c]{cc}I & \\
& \tilde{\Sigma}_{1}^{-1}Q^{T}\end{array}
\!\right)  \left(  \!\begin{array}
[c]{cc}I & \\
-C^{T} & I
\end{array}
\!\right)  \left(  \!\begin{array}
[c]{cc}L_{1}^{-1} & \\
& L_{2}^{-1}\end{array}
\!\right)  .
\]
If $C$ is not formed but kept as the form in (\ref{eq:c}), then the
application of $\tilde{L}^{-1}$ to a vector involves four smaller solution
steps: one application of $L_{1}^{-1}$ to a vector, one application of
$L_{1}^{-T}$ to a vector, and two applications of $L_{2}^{-1}$ to vectors. To reduce the number of such
solutions, we rewrite $\tilde{L}$ in (\ref{eq:ltilde}) as
\begin{align}
\tilde{L}  &  =\left(  \!\!\!\begin{array}
[c]{cc}L_{1} & \\
& I
\end{array}
\!\!\!\right)  \left(  \!\!\!\begin{array}
[c]{cc}I & \\
A_{12}^{T}L_{1}^{-T} & L_{2}\end{array}
\!\!\!\right)  \left(  \!\!\!\begin{array}
[c]{cc}I & \\
& Q\tilde{\Sigma}_{1}\end{array}
\!\!\!\right) \label{eq:l1}\\
&  =\left(  \!\!\!\begin{array}
[c]{cc}L_{1} & \\
& I
\end{array}
\!\!\!\right)  \left(  \!\!\!\begin{array}
[c]{cc}I & \\
A_{12}^{T}L_{1}^{-T} & I
\end{array}
\!\!\!\right)  \left(  \!\!\!\begin{array}
[c]{cc}I & \\
& L_{2}Q\tilde{\Sigma}_{1}\end{array}
\!\!\!\right)  .\nonumber
\end{align}
$\tilde{L}^{-1}$ now has the following form and can be conveniently applied to
a vector:\begin{equation}
\tilde{L}^{-1}=\left(
\begin{array}
[c]{cc}I & \\
& \tilde{\Sigma}_{1}^{-1}Q^{T}L_{2}^{-1}\end{array}
\right)  \left(
\begin{array}
[c]{cc}I & \\
-A_{12}^{T}L_{1}^{-T} & I
\end{array}
\right)  \left(
\begin{array}
[c]{cc}L_{1}^{-1} & \\
& I
\end{array}
\right)  . \label{eq:invl1}\end{equation}
In fact, the application of $\tilde{L}^{-1}$ to a vector now just needs the
applications of $L_{1}^{-1}$, $L_{1}^{-T}$, $L_{2}^{-1}$ to vectors. In the
eSIF($l$) scheme, $L_{1}$ and $L_{2}$ are further approximated by structured
factors from the eSIF($l-1$) scheme. In addition, $Q^{T}$ is a Householder
matrix defined by $r$ Householder vectors and can be quickly applied to a
vector. $A_{12}^{T}$ is just part of $A$. With (\ref{eq:l1}), there is no need
to form $C$ explicitly. From these discussions, it is also clear how
$\tilde{L}^{-1}$ can be applied to vectors in actual preconditioning as
structured solution.

\begin{remark}
With the form of $\tilde{L}$ in (\ref{eq:l1}), it is clear that
(\ref{eq:prec0}) still holds for $\tilde{A}$ in (\ref{eq:tildea0}). Thus, the
approximation error result (\ref{eq:e}) in Theorem \ref{thm:err1} and the effectiveness results in
Theorem \ref{thm:ev1} remain the same. This further means that Theorems
\ref{thm:err} and \ref{thm:ev} for the multilevel scheme still hold.
\end{remark}

Thirdly, although $C$ needs not to be formed, it still needs to be compressed
so as to produce $\tilde{\Sigma}_{1}$ and $Q$ in (\ref{eq:l1}). We use
randomized SVD \cite{lib07} that is based on matrix-vector products. That is,
let\begin{equation}
Y=C^{T}Z=L_{2}^{-1}(A_{12}^{T}(L_{1}^{-1}Z)), \label{eq:cz}\end{equation}
where $Z$ is an appropriate skinny random matrix with column size $r+\alpha$
and $\alpha$ is a small constant oversampling size. $Y$ can be used to extract
$V_{1}$ approximately. After this, let\begin{equation}
T=CV_{1}=L_{1}^{-1}(A_{12}(L_{2}^{-T}V_{1})). \label{eq:t}\end{equation}
$TV_{1}^{T}$ essentially provides a low-rank approximation to $C$. Many
studies of randomized SVDs in recent years have shown the reliability of this
process. The tall and skinny matrix $T$ can then be used to quickly extract
$r$ approximate leading singular values of $C$. Accordingly, this process
provides an efficient way to get approximate $Q$ and $\tilde{\Sigma}_{1}$.

Computing $Y$ in (\ref{eq:cz}) and $T$ in (\ref{eq:t}) uses linear solves in
terms of $L_{1}$ and $L_{2}$ and matrix-vector multiplications in terms of
$A_{12}$. When $\tilde{L}$ results from the eSIF($l$) scheme, $L_{1}$ and
$L_{2}$ are approximated by structured eSIF($l-1$) factors.

We then study the costs to construct and apply the eSIF($l$) factor $\tilde
{L}$ and the storage of $\tilde{L}$. In practical, we specify $r$ instead of
$\tau$ in singular value truncations so as to explicitly control the cost. In
the following estimates, the precise leading term is given for the application
cost since it impacts the preconditioning cost.

\begin{proposition}
\label{prop:cost}Suppose $A$ is repeatedly bipartitioned into $l=\left\lfloor
\log N\right\rfloor $ levels with the diagonal blocks at each partition level
having the same size (for convenience). Let $\xi_{f}$ be the complexity to
compute the eSIF($l$) factor $\tilde{L}$ where each intermediate compression
step like (\ref{eq:svd}) uses rank $r$. Let $\xi_{s}$ be the complexity to apply $\tilde{L}^{-1}$ to a vector. Then
\[
\xi_{f}=O(rN^{2}),\quad\xi_{s}=2N^{2}+O(rN^{\log3}).
\]
The storage of $\tilde{L}$ is\[
\theta=O(rN\log N),
\]
excluding any storage for the blocks of $A$.
\end{proposition}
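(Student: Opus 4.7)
The plan is to set up cost recurrences indexed by the top block size $N$, using the explicit factored form of $\tilde{L}^{-1}$ in (\ref{eq:invl1}) and the randomized sampling steps (\ref{eq:cz})--(\ref{eq:t}), and then solve them level by level. In the recursion, an eSIF($l$) factor $\tilde{L}$ of size $N$ replaces $L_1$ and $L_2$ in (\ref{eq:l1}) by eSIF($l-1$) factors of size $N/2$, so all recurrences have the standard dyadic structure.

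First I would handle the solve cost $\xi_s(N)$. Reading off (\ref{eq:invl1}), applying $\tilde{L}^{-1}$ to a vector requires: (i) one application each of $L_1^{-1}$, $L_1^{-T}$ (needed inside the factor $A_{12}^T L_1^{-T}$) and $L_2^{-1}$; (ii) one matrix--vector product with $A_{12}^T$, which is $(N/2)\times(N/2)$ and costs $N^2/2$ flops; and (iii) one application of the Householder matrix $Q^T$ defined by $r$ vectors of length $N/2$, costing $O(rN)$, together with the diagonal scaling by $\tilde{\Sigma}_1^{-1}$ of negligible cost. This yields
\[
\xi_s(N)=3\,\xi_s(N/2)+\tfrac{1}{2}N^2+O(rN).
\]
I would split $\xi_s=a+b$ with $a(N)=3a(N/2)+N^2/2$ and $b(N)=3b(N/2)+O(rN)$. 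The $a$-recurrence is top-heavy since the per-level ratio $3/4<1$, so $a(N)=(N^2/2)\sum_{k\ge0}(3/4)^k=2N^2$. The $b$-recurrence has ratio $3/2>1$ and is bottom-heavy, summing to $O(rN\cdot(3/2)^{\log N})=O(rN^{\log 3})$. Together these give $\xi_s=2N^2+O(rN^{\log 3})$.

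Next, for the construction cost $\xi_f(N)$, the top-level work consists of the two randomized-SVD sweeps (\ref{eq:cz}) and (\ref{eq:t}), each pushing $r+\alpha$ right-hand sides through $L_1^{-1}$ or $L_2^{-T}$, multiplying by $A_{12}$ or $A_{12}^T$, and solving once more against $L_2$ or $L_1$. The triangular solves cost $O(r)\cdot\xi_s(N/2)=O(rN^2)$ by the bound just obtained, the dense matrix--vector sweeps cost $O(rN^2)$, and the subsequent thin SVD of $T$ together with the Householder compression of $V_1$ costs only $O(r^2 N)$. Adding two recursive calls for $L_1$ and $L_2$ gives
\[
\xi_f(N)=2\,\xi_f(N/2)+O(rN^2),
\]
whose top level is again dominant (per-level ratio $1/2$), yielding $\xi_f=O(rN^2)$. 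For storage, at level $k$ of the dyadic tree the $2^k$ nodes each store $r$ Householder vectors of length $N/2^{k+1}$ together with the $r$ diagonal entries of $\tilde{\Sigma}_1$, for $O(rN/2^{k+1})$ per node and $O(rN)$ per level; summing across $l=\lfloor\log N\rfloor$ levels gives $\theta=O(rN\log N)$, with the finest-level direct Cholesky factors contributing only $O(N)$.

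The main obstacle I anticipate is not the recurrences themselves, which are routine, but being careful that in the solve count every application of $L_1^{-1}$, $L_1^{-T}$, and $L_2^{-1}$ at a given level incurs a \emph{separate} recursive call, making the branching factor $3$ rather than $2$. This is precisely what forces the Householder term to grow as $rN^{\log 3}$ rather than linearly, and it also pins down the precise leading constant $2$ in $\xi_s=2N^2+O(rN^{\log 3})$ through the sum of the geometric series with ratio $3/4$; getting either of these wrong would change the stated result.
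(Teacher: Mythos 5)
Your proof is correct and follows essentially the same route as the paper: the same three recurrences $\xi_s(N)=3\xi_s(N/2)+\tfrac12 N^2+O(rN)$, $\xi_f(N)=2\xi_f(N/2)+O(rN^2)$, and $\theta(N)=2\theta(N/2)+O(rN)$, read off from (\ref{eq:invl1}) and (\ref{eq:cz})--(\ref{eq:t}), with the same identification of the branching factor $3$ from the three triangular solves and the same geometric-series evaluations giving the leading constant $2$ in $\xi_s$ and the $O(rN^{\log 3})$ term. The only immaterial deviation is your $O(N)$ for the finest-level Cholesky storage where the paper charges $O(rN)$; either way it is absorbed into $O(rN\log N)$.
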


\begin{proof}
Let $\tilde{L}_{1}$ and $\tilde{L}_{2}$ be the eSIF($l-1$) factors that
approximate $L_{1}$ and $L_{2}$, respectively. For the eSIF($l$) factor
$\tilde{L}$, we use $\xi_{s}(N)$ to denote the cost to apply $\tilde{L}^{-1}$
to a vector. According to (\ref{eq:invl1}),
\[
\xi_{s}(N)=3\xi_{s}(\frac{N}{2})+2(\frac{N}{2})^{2}+O(r\frac{N}{2}),
\]
where the first term on the right-hand side is for applying $\tilde{L}_{1}^{-1}$, $\tilde{L}_{1}^{-T}$, $\tilde{L}_{2}^{-1}$ to vectors, the second
term is the dominant cost for multiplying $A_{12}^{T}$ in (\ref{eq:invl1}) to
a vector, and the third term is for the remaining costs (mainly to multiple
$Q^{T}$ to a vector). This gives a recursive relationship which can be expanded to yield
\begin{align*}
\xi_{s}(N)  &  =\frac{2}{3}N^{2}\sum_{i=1}^{l}\frac{3^{i}}{4^{i}}+O(rN\sum_{i=1}^{l}\frac{3^{i}}{2^{i}})\\
&  =2N^{2}+O(r3^{l})=2N^{2}+O(rN^{\log3}).
\end{align*}

Then consider the cost $\xi_{f}(N)$ to compute $\tilde{L}$. We have
\[
\xi_{f}(N)=2\xi_{f}(\frac{N}{2})+4(r+\alpha)\xi_{s}(\frac{N}{2})+4(r+\alpha
)(\frac{N}{2})^{2}+O(r^{2}\frac{N}{2}),
\]
where the first term on the right-hand side is for constructing the
eSIF($l-1$) factors $\tilde{L}_{1},\tilde{L}_{2}$, the second term is for
applying the relevant inverses of these factors as in (\ref{eq:cz}) and
(\ref{eq:t}) during the randomized SVD (with $\alpha$ the small constant
oversampling size), the third term is for multiplying $A_{12}^{T}$ and
$A_{12}$ to vectors as in (\ref{eq:cz}) and (\ref{eq:t}), and the last term is
for the remaining costs. Based on this recursive relationship, we can
similarly obtain $\xi_{f}(N)=O(rN^{2})$.

Finally, the storage $\theta(N)$ for $\tilde{L}$ (excluding the blocks of $A$)
mainly includes the storage for $\tilde{L}_{1},\tilde{L}_{2}$ and the $r$
Householder vectors for $Q$ in (\ref{eq:l1}):\[
\theta(N)=2\theta(\frac{N}{2})+O(rN).
\]
At the finest level of the partitioning of $A$, it also needs the storage of
$O(rN)$ for the Cholesky factors of the small diagonal blocks. Essentially,
the actual storage at each level is then $O(rN)$ and the total storage is
$\theta=O(rlN)$.
\end{proof}

We can see that the storage for the structured factors is roughly linear in
$N$ since $r$ is often fixed to be a small constant in preconditioning. The
cost to construct the preconditioner is just a one-time cost. The cost of
applying the preconditioner has a leading term $2N^{2}$. However, note that it
costs about $2N^{2}$ to multiply $A$ with a vector in each iteration anyway.
In the SIF case in \cite{hssprec}, it also costs $O(rN^{2})$ to construct the
multilevel preconditioner. The SIF application cost is lower but each
iteration step still costs $O(N^{2})$ due to the matrix-vector multiplication.
Furthermore, SIF\ preconditioners may not exist for some cases due to the loss
of positive definiteness. In the next section, we can see that the multilevel
eSIF\ preconditioner can often dramatically reduce the number of iterations so that it saves the total cost significantly.

\section{Numerical experiments\label{sec:tests}}

We then show the performance of the multilevel eSIF preconditioner in
accelerating the convergence of the preconditioned conjugate gradient method
(PCG). We compare the following three preconditioners.

\begin{itemize}
\item \textsf{bdiag}: the block diagonal preconditioner.

\item \textsf{SIF}: an SIF\ preconditioner from \cite{hssprec} (two versions
of SIF\ preconditioners are given in \cite{hssprec}, we use the one with
better robustness).

\item \textsf{eSIF}: the multilevel eSIF preconditioner.
\end{itemize}

In \cite{hssprec}, it has been shown that \textsf{SIF} is generally much more
effective than a preconditioner based on direct
rank-structured\ approximations. Here, we would like to show how \textsf{eSIF}
further outperforms \textsf{SIF}. The following notation is used to simplify
the presentation of the test results.

\begin{itemize}
\item $\gamma=\frac{\Vert Ax-b\Vert_{2}}{\Vert b\Vert_{2}}$: 2-norm relative
residual for a numerical solution $x$, with $b$ generated using the exact
solution vector of all ones.

\item $n_{\mathrm{iter}}$: total number of iterations to reach a certain
accuracy for the relative residual.

\item $A_{\mathrm{prec}}$: matrix preconditioned by the factors from the
preconditioners (for example, $A_{\mathrm{prec}}=\tilde{L}^{-1}A\tilde{L}^{-T}$ in the eSIF case).

\item $r$: numerical rank used in any low-rank approximation step in
constructing \textsf{SIF} and \textsf{eSIF}.

\item $l$: total number of levels in \textsf{SIF} and \textsf{eSIF}.
\end{itemize}

When \textsf{SIF} and \textsf{eSIF} are constructed, we use the same
parameters $r$, $l$, and finest level diagonal block size. The preconditioner
\textsf{bdiag} is constructed with the same diagonal block sizes as those of
the finest level diagonal block sizes of \textsf{SIF} and \textsf{eSIF}.

\begin{example}
\label{ex1}We first test the methods on the matrix $A$ with the $(i,j)$ entry\[
A_{ij}=\frac{(ij)^{1/4}\pi}{20+0.8(i-j)^{2}},
\]
which is modified from a test example in \cite{hssprec} to make it more challenging.
\end{example}

In the construction of \textsf{SIF} and \textsf{eSIF}, we use $r=5$. With the
matrix size $N$ increases, $l$ increases accordingly for \textsf{SIF} and
\textsf{eSIF} so that the finest level diagonal block size is fixed. Table
\ref{tab:ex1} shows the results of PCG iterations to reach the tolerance
$10^{-12}$ for the relative residual $\gamma$. Both \textsf{SIF} and
\textsf{eSIF} help significantly reduce the condition numbers. The both make
PCG converge much faster than using \textsf{bdiag}. \textsf{eSIF} is further
much more effective than \textsf{SIF} and leads to $\kappa(A_{\mathrm{prec}})$
close to $1$. PCG with \textsf{eSIF} only needs few steps to reach the desired
accuracy. The numbers of iterations are lower than with \textsf{SIF} by about
12 to 15 times.\begin{table}[ptbh]
\caption{\emph{Example \ref{ex1}}. Convergence results of PCG with
\textsf{bdiag}, \textsf{SIF}, and \textsf{eSIF} preconditioners. (For the two
largest matrices, it is very slow to form $A_{\mathrm{prec}}$, so the
condition numbers are not computed.)}\label{tab:ex1}
\begin{center}
\tabcolsep2pt\renewcommand{\arraystretch}{1.1}
\begin{tabular}
[c]{|c|c|c|c|c|c|c|c|}\hline
\multicolumn{2}{|c|}{$N$} & $1280$ & $2560$ & $5120$ & $10,240$ & $20,480$ &
$40,960$\\\hline
\multicolumn{2}{|c|}{$l$} & $8$ & $9$ & $10$ & $11$ & $12$ & $13$\\\hline
\multicolumn{2}{|c|}{$\kappa(A)$} & $2.66e7$ & $3.85e7$ & $5.55e7$ & $7.95e7$
&  & \\\hline\hline
\multirow{3}{*}{$\kappa(A_{\rm prec})$} & \textsf{bdiag} & $1.41e5$ & $1.42e5$
& $1.42e5$ & $1.42e5$ &  & \\
& \textsf{SIF} & $5.03e1$ & $5.03e1$ & $5.03e1$ & $5.03e1$ &  & \\
& \textsf{eSIF} & $1.01$ & $1.01$ & $1.02$ & $1.02$ &  & \\\hline
\multirow{3}{*}{$n_{\mathrm{iter}}$} & \textsf{bdiag} & $570$ & $562$ & $546$
& $551$ & $526$ & $525$\\
& \textsf{SIF} & $57$ & $60$ & $61$ & $60$ & $60$ & $60$\\
& \textsf{eSIF} & $4$ & $4$ & $4$ & $4$ & $4$ & $5$\\\hline
\multirow{3}{*}{$\gamma$} & \textsf{bdiag} & $9.65e\!-\!13$ & $9.49e\!-\!13$ &
$9.50e\!-\!13$ & $6.33e\!-\!13$ & $7.89e\!-\!13$ & $7.93e\!-\!13$\\
& \textsf{SIF} & $8.02e\!-\!13$ & $8.42e\!-\!13$ & $3.54e\!-\!13$ &
$9.36e\!-\!13$ & $7.36e\!-\!13$ & $8.28e\!-\!13$\\
& \textsf{eSIF} & $5.90e\!-\!15$ & $5.48e\!-\!15$ & $1.34e\!-\!13$ &
$4.28e\!-\!13$ & $5.00e\!-\!14$ & $9.61e\!-\!15$\\\hline
\end{tabular}
\end{center}
\end{table}

Figure \ref{fig:ex1conv}(a) shows the actual convergence behaviors for one
matrix and Figure \ref{fig:ex1conv}(b) reflects how the preconditioners change
the eigenvalue distributions. With \textsf{eSIF}, the eigenvalues of
$A_{\mathrm{prec}}$ are all closely clustered around $1$.

\begin{figure}[ptbh]
\centering\begin{tabular}
[c]{cc}\includegraphics[height=1.6in]{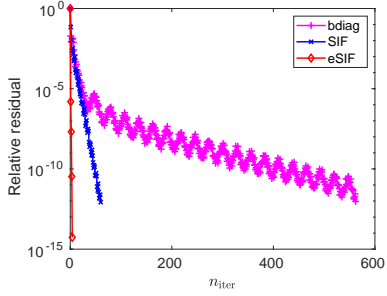} &
\includegraphics[height=1.6in]{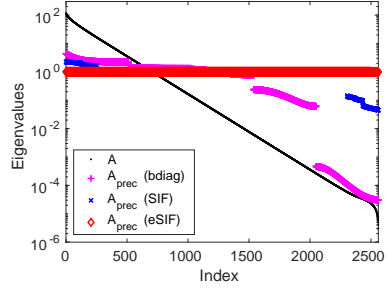}\\
{\scriptsize (a) Convergence} & {\scriptsize (b) Eigenvalues}\end{tabular}
\caption{\emph{Example \ref{ex1}}. Convergence of PCG with \textsf{bdiag},
\textsf{SIF}, and \textsf{eSIF} preconditioners and eigenvalues of the
preconditioned matrices for $N=2560$ in Table \ref{tab:ex1}.}\label{fig:ex1conv}\end{figure}

To confirm the efficiency of \textsf{eSIF}, we plot the storage requirement of
\textsf{eSIF} and the cost to apply the preconditioner in each step. Since $r$
is fixed, the storage of \textsf{eSIF} is $O(N\log N)$, which is confirmed in
Figure \ref{fig:ex1mem}.
\begin{figure}[!h]
\centering\begin{tabular}
[c]{cc}
\includegraphics[height=1.6in]{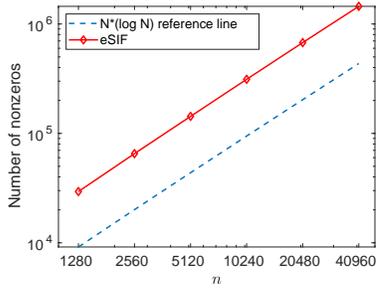} &
\includegraphics[height=1.6in]{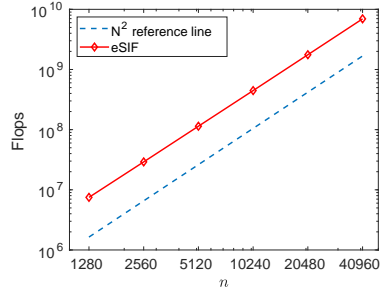}\\
{\scriptsize(a) Storage} & {\scriptsize(b) Application cost}
\end{tabular}
\caption{\emph{Example \ref{ex1}}. Storage for the structured factors of the
eSIF preconditioner (excluding the storage for $A$) and the application cost.}
\label{fig:ex1mem}
\end{figure}

\begin{example}
\label{ex2}In the second example, we consider to precondition some RBF (radial
basis function) interpolation matrices which are known to be notoriously
challenging for iterative methods due to the ill condition with some shape
parameters (see, e.g., \cite{boy11}). We consider the following four types of
RBFs:
\[
e^{-\varepsilon^{2}t^{2}},\quad\operatorname*{sech}\varepsilon t,\quad\frac
{1}{\sqrt{1+\varepsilon^{2}t^{2}}},\quad\frac{1}{1+\varepsilon^{2}t^{2}},
\]
where $\varepsilon$ is the shape parameter. The interpolation matrices are
obtained with grid points $0,1,\ldots,N-1$.
\end{example}

We test the RBF interpolation matrices $A$ with various different shape
parameters. With $N=1280$, $r=6$, and $l=8$, the performance of PCG\ to reach
the tolerance $10^{-12}$ for $\gamma$ is given in Table \ref{tab:ex2r6}. When
the shape parameter $\varepsilon$ reduces, the condition numbers of the
interpolation matrices increase quickly. \textsf{SIF} improves the condition
numbers more significantly than \textsf{bdiag}. However, for smaller
$\varepsilon$, the condition numbers resulting from both \textsf{bdiag} and
\textsf{SIF} get much worse and the convergence of PCG slows down.

\begin{table}[h]
\caption{\emph{Example \ref{ex2}}. Convergence results of PCG using
\textsf{bdiag}, \textsf{SIF}, and \textsf{eSIF} preconditioners with $r=6$ in
\textsf{SIF} and \textsf{eSIF}.}\label{tab:ex2r6}
\begin{center}
\tabcolsep2.2pt\renewcommand{\arraystretch}{1.1}
\begin{tabular}
[c]{|c|c|c|c|c|c|c|c|}\hline
\multicolumn{2}{|c|}{RBF} & \multicolumn{3}{c|}{$e^{-\varepsilon^{2}t^{2}}$} &
\multicolumn{3}{c|}{$\operatorname*{sech}\varepsilon t$}\\\hline
\multicolumn{2}{|c|}{$\varepsilon$} & $0.4$ & $0.36$ & $0.32$ & $0.3$ & $0.25$
& $0.2$\\\hline
\multicolumn{2}{|c|}{$\kappa(A)$} & $2.49e6$ & $9.27e7$ & $1.46e10$ & $3.48e6$
& $9.34e7$ & $1.30e10$\\\hline
\multirow{3}{*}{$\kappa(A_{\rm prec})$} & \textsf{bdiag} & $1.26e5$ & $4.50e6$
& $7.11e8$ & $1.52e5$ & $4.24e6$ & $6.28e8$\\
& \textsf{SIF} & $2.38$ & $2.11e3$ & $2.14e6$ & $1.34$ & $5.02e2$ & $7.58e5$\\
& \textsf{eSIF} & $1.00$ & $1.00$ & $1.00$ & $1.00$ & $1.00$ & $1.30$\\\hline
\multirow{3}{*}{$n_{\mathrm{iter}}$} & \textsf{bdiag} & $700$ & $2193$ &
$4482$ & $547$ & $1271$ & $3211$\\
& \textsf{SIF} & $15$ & $107$ & $549$ & $9$ & $52$ & $282$\\
& \textsf{eSIF} & $1$ & $1$ & $2$ & $1$ & $1$ & $3$\\\hline
\multirow{3}{*}{$\gamma$} & \textsf{bdiag} & $8.82e\!-\!13$ & $8.62e\!-\!13$ &
$8.97e\!-\!13$ & $7.97e\!-\!13$ & $9.28e\!-\!13$ & $8.25e\!-\!13$\\
& \textsf{SIF} & $4.94e\!-\!13$ & $5.16e\!-\!13$ & $9.86e\!-\!13$ &
$4.02e\!-\!13$ & $9.44e\!-\!13$ & $9.91e\!-\!13$\\
& \textsf{eSIF} & $6.16e\!-\!16$ & $7.34e\!-\!15$ & $2.63e\!-\!16$ &
$6.96e\!-\!15$ & $1.85e\!-\!13$ & $4.91e\!-\!14$\\\hline\hline
\multicolumn{2}{|c|}{RBF} & \multicolumn{3}{c|}{$\frac{1}{\sqrt{1+\varepsilon
^{2}t^{2}}}$} & \multicolumn{3}{c|}{$\frac{1}{1+\varepsilon^{2}t^{2}}$}\\\hline
\multicolumn{2}{|c|}{$\varepsilon$} & $0.3$ & $0.25$ & $0.2$ & $1/4$ & $1/5$ &
$1/6$\\\hline
\multicolumn{2}{|c|}{$\kappa(A)$} & $2.64e5$ & $2.27e6$ & $5.62e7$ & $1.42e5$
& $3.29e6$ & $7.59e7$\\\hline
\multirow{3}{*}{$\kappa(A_{\rm prec})$} & \textsf{bdiag} & $1.15e4$ & $9.64e4$
& $2.40e6$ & $6.18e3$ & $1.41e5$ & $3.34e6$\\
& \textsf{SIF} & $1.74$ & $6.30$ & $2.22e2$ & $1.94$ & $2.66e1$ & $8.91e2$\\
& \textsf{eSIF} & $1.00$ & $1.00$ & $1.26$ & $1.00$ & $1.00$ & $1.03$\\\hline
\multirow{3}{*}{$n_{\mathrm{iter}}$} & \textsf{bdiag} & $195$ & $375$ & $937$
& $190$ & $541$ & $1222$\\
& \textsf{SIF} & $13$ & $27$ & $86$ & $14$ & $43$ & $104$\\
& \textsf{eSIF} & $3$ & $3$ & $6$ & $2$ & $3$ & $5$\\\hline
\multirow{3}{*}{$\gamma$} & \textsf{bdiag} & $9.21e\!-\!13$ & $7.19e\!-\!13$ &
$8.92e\!-\!13$ & $9.84e\!-\!13$ & $9.16e\!-\!13$ & $7.52e\!-\!13$\\
& \textsf{SIF} & $4.23e\!-\!13$ & $5.14e\!-\!13$ & $6.20e\!-\!13$ &
$2.72e\!-\!13$ & $7.15e\!-\!13$ & $1.95e\!-\!13$\\
& \textsf{eSIF} & $1.77e\!-\!15$ & $1.62e\!-\!15$ & $8.16e\!-\!15$ &
$2.36e\!-\!13$ & $5.58e\!-\!13$ & $2.05e\!-\!15$\\\hline
\end{tabular}
\end{center}
\end{table}

On the other hand, \textsf{eSIF} performs significantly better for all the
cases. Dramatic reductions in the numbers of iterations can be observed. In
Table \ref{tab:ex2r6}, the number of PCG iterations with \textsf{eSIF} is up
to $274$ times lower than with \textsf{SIF} and up to $2241$ times lower than
with \textsf{bdiag}. Overall, PCG with \textsf{eSIF} takes just few iterations
to reach the desired accuracy.

Figure \ref{fig:ex2conv}(a) shows the actual convergence behaviors for one
case and Figure \ref{fig:ex2conv}(b) illustrates how the preconditioners
improve the eigenvalue distribution. Again, the eigenvalue clustering with
\textsf{eSIF} is much better. \begin{figure}[ptbh]
\centering\begin{tabular}
[c]{cc}\includegraphics[height=1.6in]{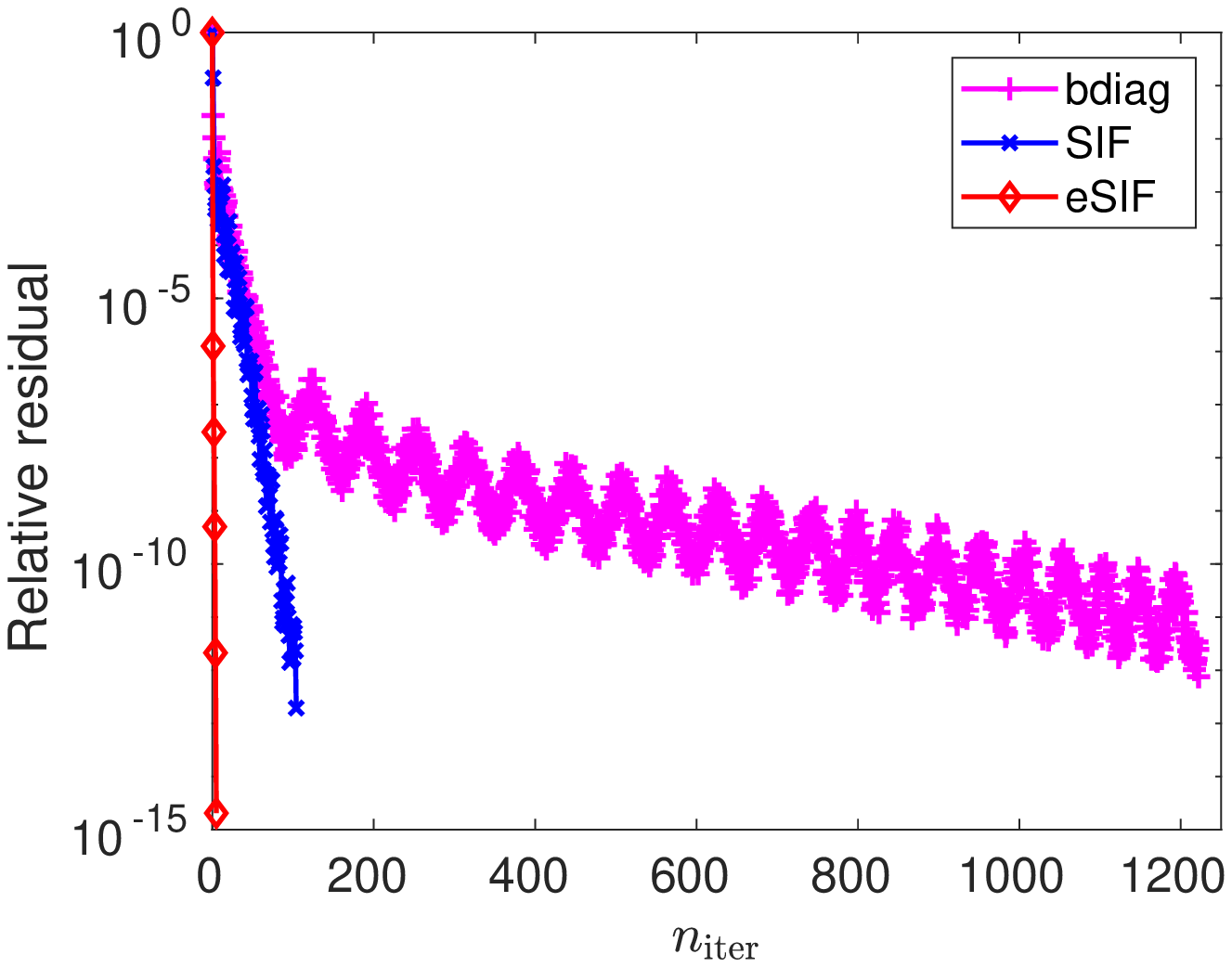} &
\includegraphics[height=1.6in]{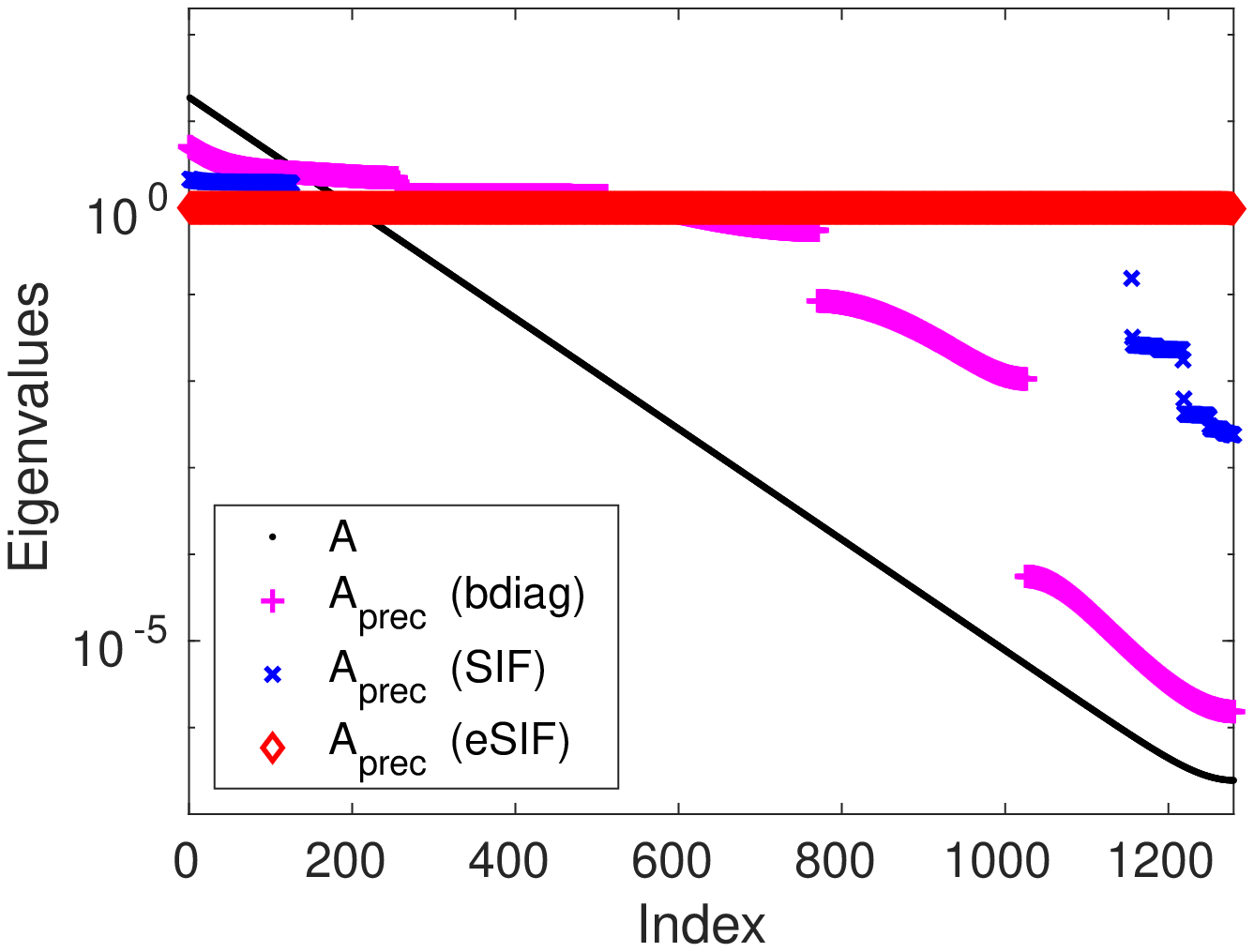}\\
{\scriptsize (a) Convergence} & {\scriptsize (b) Eigenvalues}\end{tabular}
\caption{\emph{Example \ref{ex2}}. Convergence of PCG and eigenvalues of the
preconditioned matrices for the case with RBF $\frac{1}{1+\varepsilon^{2}t^{2}}$, $\varepsilon=\frac{1}{6}$ in Table \ref{tab:ex2r6}.}\label{fig:ex2conv}\end{figure}

We also try different numerical ranks $r$ and the results are reported in
Table \ref{tab:ex2r84}. \textsf{SIF} is more sensitive to $r$. For some cases,
\textsf{SIF} with $r=4$ leads to quite slow convergence of PCG. In contrast,
\textsf{eSIF} remains very effective for the different $r$ choices and yields
much faster convergence.

\begin{table}[ptbh]
\caption{\emph{Example \ref{ex2}}. Convergence results of PCG using
\textsf{SIF} and \textsf{eSIF} preconditioners with different $r$.}\label{tab:ex2r84}
\begin{center}
\tabcolsep1.8pt\renewcommand{\arraystretch}{1.1}
\begin{tabular}
[c]{|c|c|c|c|c|c|c|c|c|}\hline
\multicolumn{3}{|c|}{RBF} & \multicolumn{3}{c|}{$e^{-\varepsilon^{2}t^{2}}$} &
\multicolumn{3}{c|}{$\operatorname*{sech}\varepsilon t$}\\\hline
\multicolumn{3}{|c|}{$\varepsilon$} & $0.3$ & $0.25$ & $0.2$ & $1/4$ & $1/5$ &
$1/6$\\\hline
\multirow{4}{*}{$\kappa(A_{\rm prec})$} & \multirow{2}{*}{$r=8$} &
\textsf{SIF} & $1.01$ & $2.35$ & $3.64e4$ & $1.00$ & $1.23$ & $4.80e3$\\
&  & \textsf{eSIF} & $1.00$ & $1.00$ & $1.00$ & $1.00$ & $1.00$ &
$1.06$\\\cline{2-9}
& \multirow{2}{*}{$r=4$} & \textsf{SIF} & $5.17e2$ & $7.51e4$ & $6.94e7$ &
$1.41e2$ & $4.61e4$ & $1.82e7$\\
&  & \textsf{eSIF} & $1.00$ & $1.00$ & $5.58$ & $1.00$ & $1.01$ &
$1.58e2$\\\hline
\multirow{4}{*}{$n_{\mathrm{iter}}$} & \multirow{2}{*}{$r=8$} & \textsf{SIF} &
$5$ & $13$ & $245$ & $4$ & $7$ & $69$\\
&  & \textsf{eSIF} & $1$ & $1$ & $1$ & $1$ & $1$ & $2$\\\cline{2-9}
& \multirow{2}{*}{$r=4$} & \textsf{SIF} & $178$ & $751$ & $3972$ & $92$ &
$410$ & $1613$\\
&  & \textsf{eSIF} & $2$ & $3$ & $17$ & $2$ & $3$ & $14$\\\hline
\multirow{4}{*}{$\gamma$} & \multirow{2}{*}{$r=8$} & \textsf{SIF} &
$7.95e\!-\!15$ & $2.90e\!-\!13$ & $4.95e\!-\!13$ & $2.64e\!-\!15$ &
$3.18e\!-\!13$ & $4.28e\!-\!13$\\
&  & \textsf{eSIF} & $6.89e\!-\!16$ & $1.08e\!-\!15$ & $1.23e\!-\!14$ &
$6.28e\!-\!15$ & $1.85e\!-\!13$ & $8.59e\!-\!13$\\\cline{2-9}
& \multirow{2}{*}{$r=4$} & \textsf{SIF} & $9.09e\!-\!13$ & $9.42e\!-\!13$ &
$4.36e\!-\!11$ & $8.11e\!-\!13$ & $6.92e\!-\!13$ & $6.06e\!-\!13$\\
&  & \textsf{eSIF} & $1.20e\!-\!15$ & $4.63e\!-\!15$ & $7.58e\!-\!13$ &
$9.14e\!-\!16$ & $8.64e\!-\!14$ & $6.33e\!-\!13$\\\hline\hline
\multicolumn{3}{|c|}{RBF} & \multicolumn{3}{c|}{$\frac{1}{\sqrt{1+\varepsilon
^{2}t^{2}}}$} & \multicolumn{3}{c|}{$\frac{1}{1+\varepsilon^{2}t^{2}}$}\\\hline
\multicolumn{3}{|c|}{$\varepsilon$} & $0.3$ & $0.25$ & $0.2$ & $1/4$ & $1/5$ &
$1/6$\\\hline
\multirow{4}{*}{$\kappa(A_{\rm prec})$} & \multirow{2}{*}{$r=8$} &
\textsf{SIF} & $1.39$ & $3.66$ & $1.06e2$ & $1.45$ & $6.32$ & $6.21e1$\\
&  & \textsf{eSIF} & $1.00$ & $1.00$ & $1.00$ & $1.00$ & $1.00$ &
$1.00$\\\cline{2-9}
& \multirow{2}{*}{$r=4$} & \textsf{SIF} & $6.96e1$ & $7.44e2$ & $2.47e4$ &
$2.98$ & $9.42e1$ & $1.91e4$\\
&  & \textsf{eSIF} & $1.03$ & $1.56$ & $1.18$ & $1.00$ & $1.06$ &
$4.34$\\\hline
\multirow{4}{*}{$n_{\mathrm{iter}}$} & \multirow{2}{*}{$r=8$} & \textsf{SIF} &
$10$ & $19$ & $75$ & $11$ & $27$ & $64$\\
&  & \textsf{eSIF} & $2$ & $2$ & $2$ & $2$ & $2$ & $3$\\\cline{2-9}
& \multirow{2}{*}{$r=4$} & \textsf{SIF} & $77$ & $224$ & $761$ & $19$ & $87$ &
$368$\\
&  & \textsf{eSIF} & $5$ & $8$ & $19$ & $4$ & $5$ & $14$\\\hline
\multirow{4}{*}{$\gamma$} & \multirow{2}{*}{$r=8$} & \textsf{SIF} &
$9.73e\!-\!14$ & $7.71e\!-\!13$ & $4.63e\!-\!13$ & $1.11e\!-\!13$ &
$2.50e\!-\!13$ & $6.97e\!-\!13$\\
&  & \textsf{eSIF} & $1.78e\!-\!15$ & $2.19e\!-\!14$ & $1.09e\!-\!13$ &
$1.44e\!-\!15$ & $3.02e\!-\!15$ & $1.95e\!-\!15$\\\cline{2-9}
& \multirow{2}{*}{$r=4$} & \textsf{SIF} & $5.93e\!-\!13$ & $9.84e\!-\!13$ &
$9.21e\!-\!13$ & $4.81e\!-\!13$ & $9.20e\!-\!13$ & $5.71e\!-\!13$\\
&  & \textsf{eSIF} & $8.38e\!-\!14$ & $9.19e\!-\!13$ & $1.87e\!-\!13$ &
$3.84e\!-\!15$ & $2.67e\!-\!13$ & $1.05e\!-\!13$\\\hline
\end{tabular}
\end{center}
\end{table}

\begin{example}
\label{ex3}In the last example, we compare \textsf{eSIF} with \textsf{SIF} in
terms of the following test matrices from different application backgrounds.

\begin{itemize}
\item \texttt{MHD3200B} ($N=3200$, $\kappa(A)=1.60e13$): The test matrix
MHD3200B from the Matrix Market \cite{matmart} treated as a dense matrix.
$r=9$ and $l=8$ are used in the test.

\item \texttt{ElasSchur} ($N=3198$, $\kappa(A)=8.91e6$): A Schur complement in
the factorization of a discretized linear elasticity equation as used in
\cite{schurmono}. The ratio of the so-called Lam\'{e} constants is $10^{5}$.
The original sparse discretized matrix has size $5,113,602$ and $A$
corresponds to the last separator in the nested dissection ordering of the
sparse matrix. $r=5$ and $l=9$ are used in the test.

\item \texttt{LinProg} ($N=2301$, $\kappa(A)=2.09e11$): A test example in
\cite{hssprec} from linear programming. The matrix is formed by $A=BDB^{T}$,
where $B$ is from the linear programming test matrix set Meszaros in
\cite{suitesparse} and $D$ is a diagonal matrix with diagonal entries evenly
located in $[10^{-5},1]$. $r=3$ and $l=9$ are used in the test.

\item \texttt{Gaussian} ($N=4000$, $\kappa(A)=1.41e10$): a matrix of the form
$sI+G$ with $G$ from the discretization of the Gaussian kernel $e^{-\frac
{\Vert t_{i}-t_{j}\Vert_{2}}{2\mu^{2}}}$. Such matrices frequently appear in
applications such as Gaussian processes. Here, $s=10^{-9}$, $\mu=2.5$ and the
$t_{i}$ points are random points distributed in a long three dimensional
rectangular parallelepiped. $r=20$ and $l=8$ are used in the test.
\end{itemize}
\end{example}

The convergence behaviors of PCG with \textsf{SIF} and \textsf{eSIF}
preconditioners are given in Figure \ref{fig:ex3}. Much faster convergence of
PCG can be observed with \textsf{eSIF}. For the four matrices listed in the
above order, the numbers of PCG iterations with \textsf{SIF} are about 11, 7,
7, and 21 times of those with \textsf{eSIF}, respectively.\begin{figure}[ptbh]
\centering\begin{tabular}
[c]{cc}\includegraphics[height=1.6in]{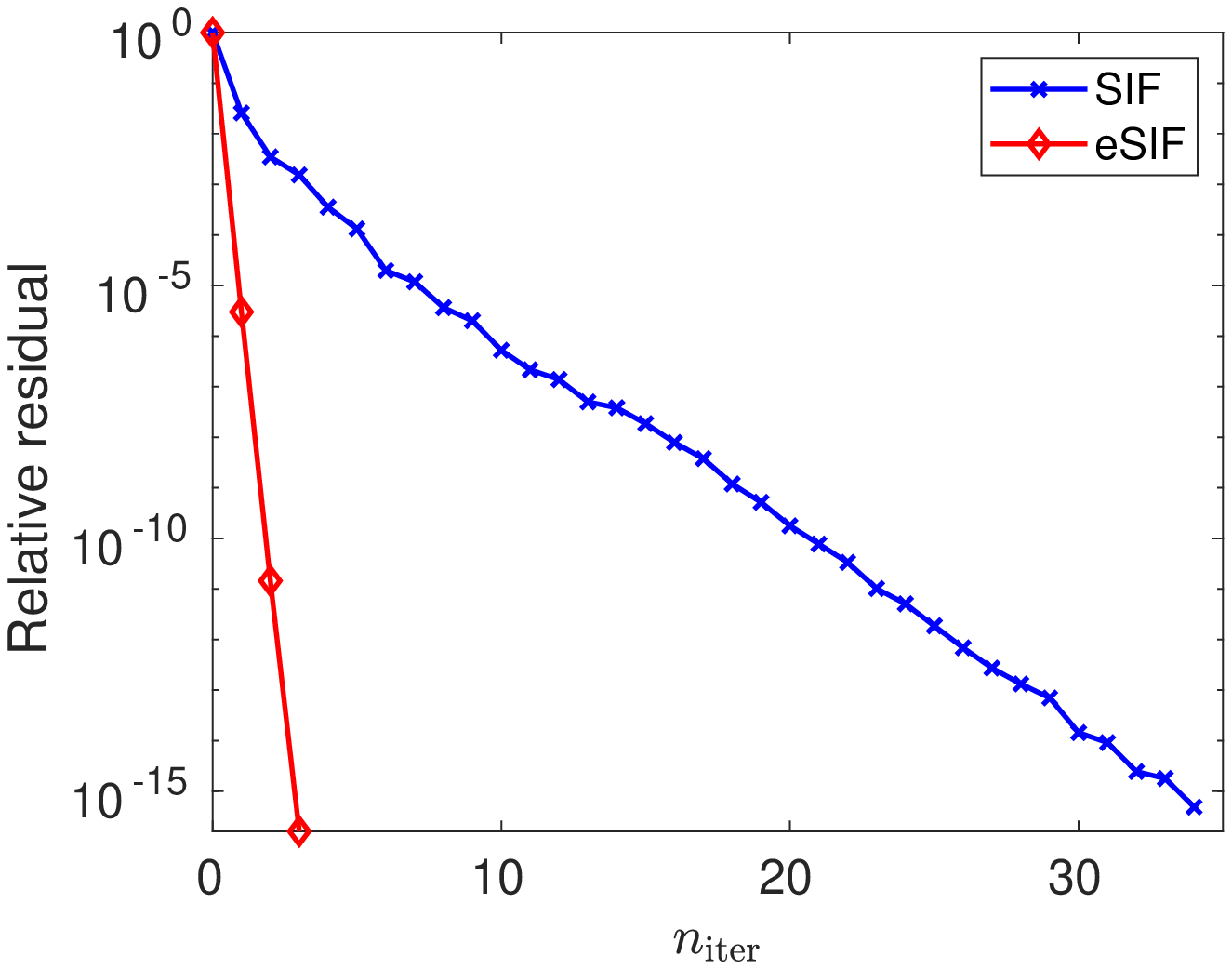} &
\includegraphics[height=1.6in]{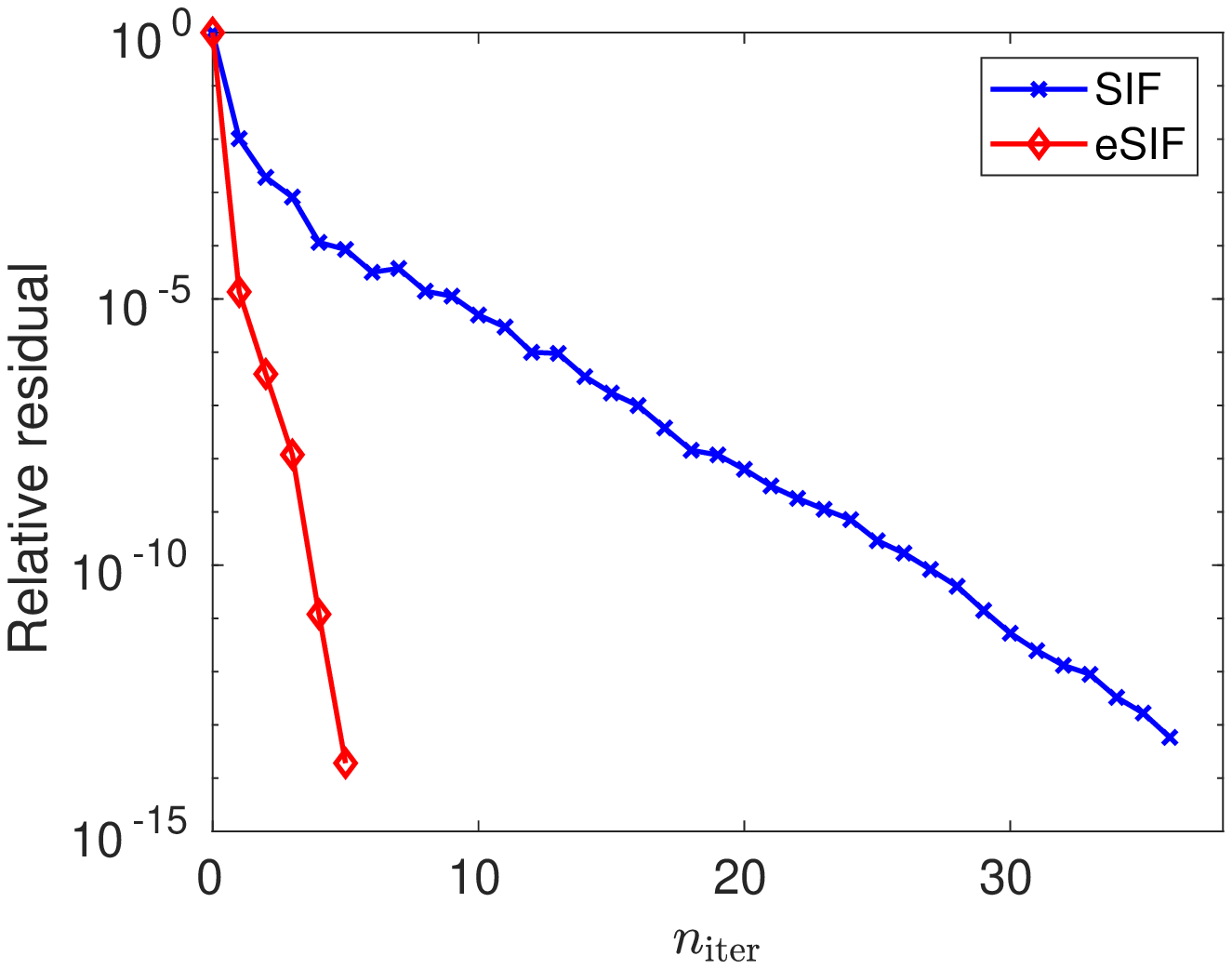}\\
{\scriptsize (a) \texttt{MHD3200B}} & {\scriptsize (b) \texttt{ElasSchur}}\\[2mm]\includegraphics[height=1.6in]{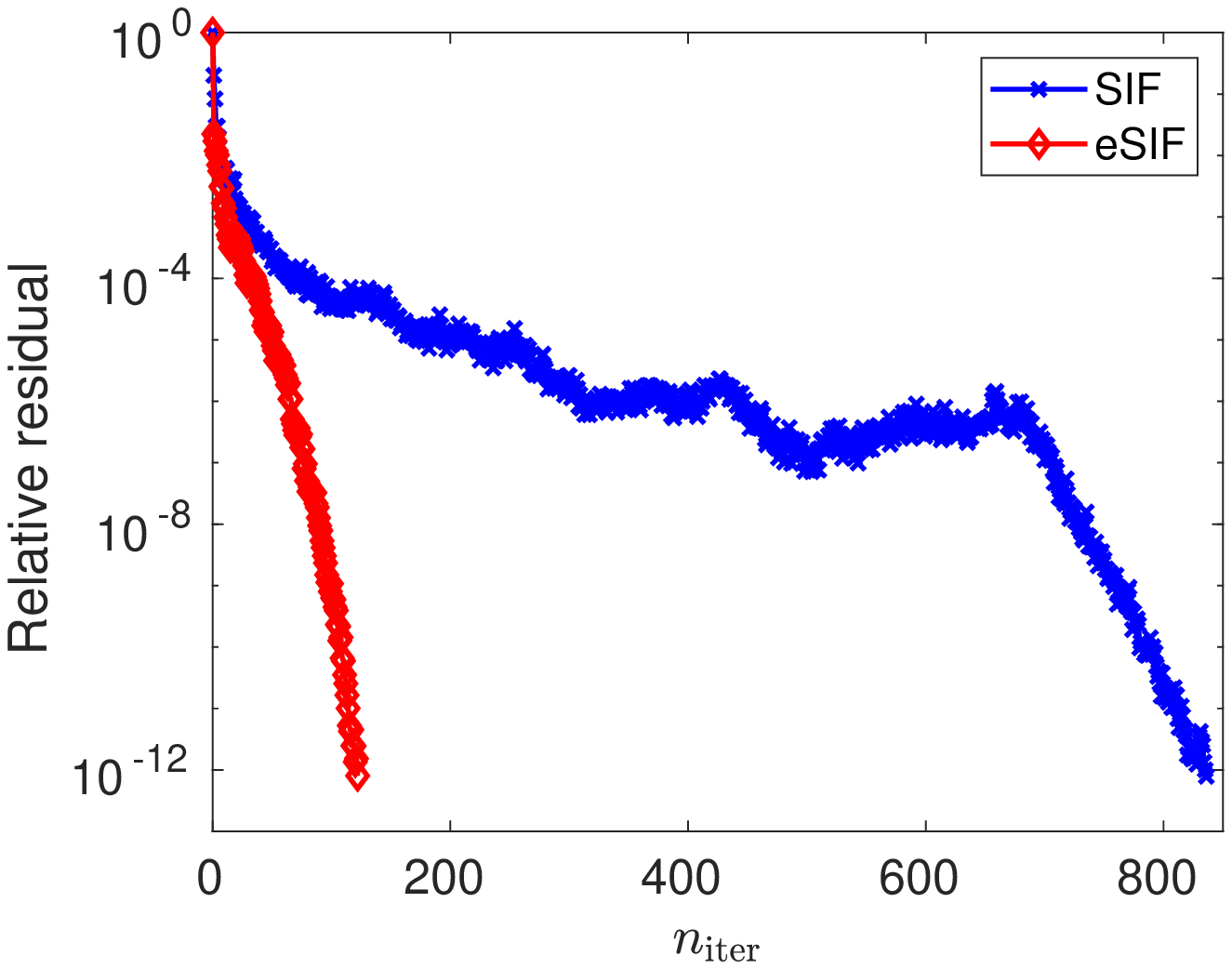} &
\includegraphics[height=1.6in]{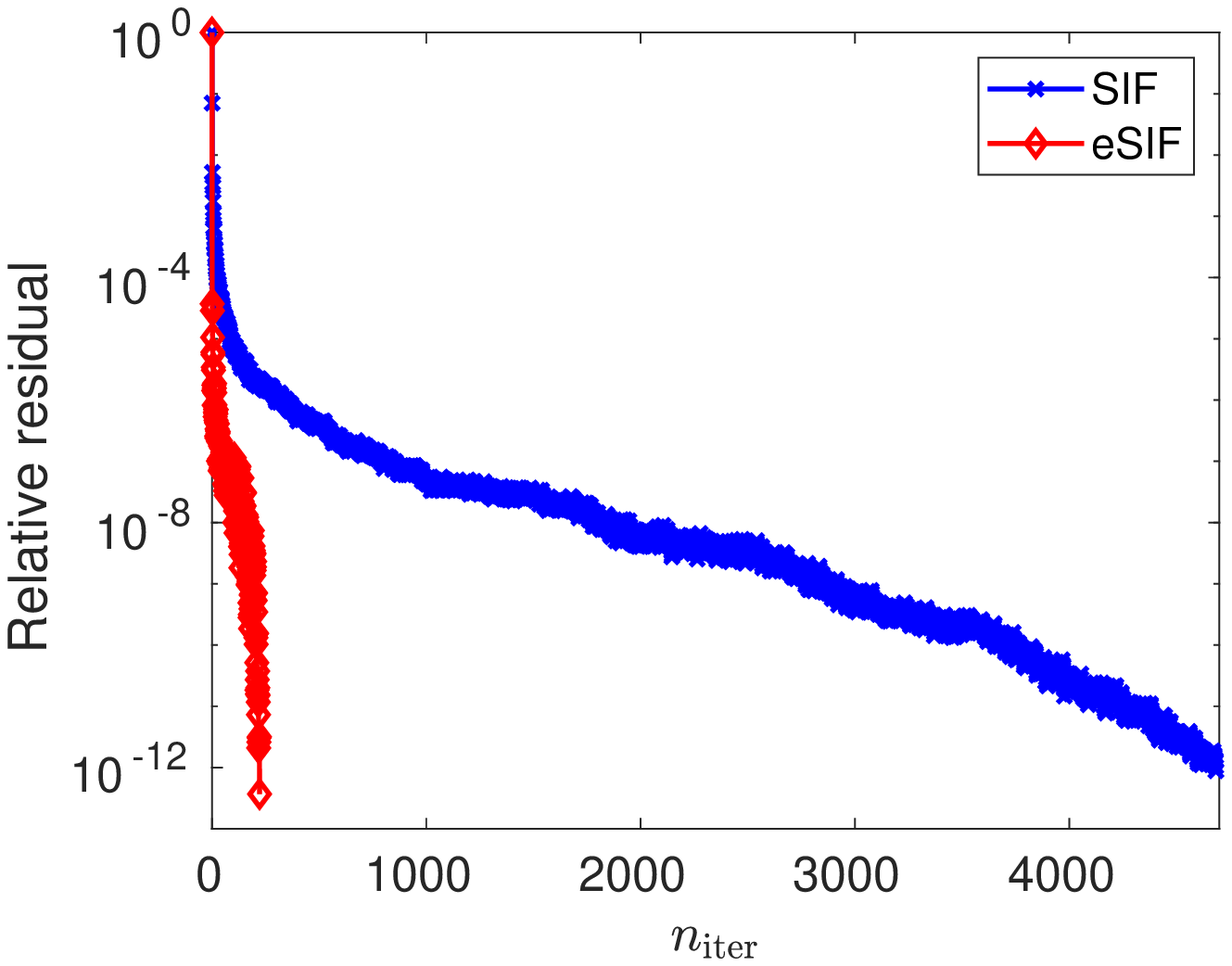}\\
{\scriptsize (c) \texttt{LinProg}} & {\scriptsize (d) \texttt{Gaussian}}\end{tabular}
\caption{\emph{Example \ref{ex3}}. Convergence of PCG with \textsf{SIF} and
\textsf{eSIF} preconditioners.}\label{fig:ex3}\end{figure}

\section{Conclusions\label{sec:concl}}

We have presented an \textsf{eSIF} framework that enhances a recent \textsf{SIF} preconditioner in multiple aspects.
During the construction of
the preconditioner, two-sided block triangular preprocessing is followed by
low-rank approximations in appropriate computations. Analysis of both the
prototype preconditioner and the practical multilevel extension is given. We
are able to not only overcome a major bottleneck of potential loss of positive definiteness in the
\textsf{SIF} scheme but also significantly improve the accuracy bounds,
condition numbers, and eigenvalue distributions. Thorough comparisons in terms
of the analysis and the test performance are given.

In our future work, we expect to explore new preprocessing and approximation
strategies that can further improve the eigenvalue clustering and accelerate
the decay magnification effect in the condition number. The current work
successfully improves the relevant accuracy, condition number, and eigenvalue
bounds by a significant amount (e.g., from $\frac{1+\hat{\epsilon}}{1-\hat{\epsilon}}$ to $1+\epsilon$ in Table \ref{tab:compare2} with
$\epsilon$ much smaller than $\hat{\epsilon}$). We expect to further continue this trend and in
the meantime keep the preconditioners convenient to apply. We will also
explore the feasibility of extending our ideas to nonsymmetric and indefinite matrices.

\end{document}